\theoremstyle{plain}
\newtheorem{theorem}{Theorem}
\newtheorem{lemma}[theorem]{Lemma}
\newtheorem{corollary}[theorem]{Corollary}
\theoremstyle{definition}
\newtheorem{definition}[theorem]{Definition}
\theoremstyle{remark}
\DeclareMathOperator{\esssup}{esssup}
\DeclareMathOperator{\Div }{div}
\newcommand{\p}{\partial}
\newcommand{\n}{\triangledown}
\newcommand{\ov}{\overline}
\DeclareMathOperator{\diver}{div}
\newcommand{\mb}{\mathbb}
\DeclareMathOperator{\sign}{sign}
\DeclareMathOperator{\integra}{\int\limits_0^T \int\limits_{M^2} \int\limits_{\mb{R}^2}}
\DeclareMathOperator{\integral}{\int\limits_{M^2} \int\limits_{\mb{R}^2}}
\def\pa{\partial}
\def\cal{\mathcal}
\def\R{{\mathbb R}}
\def\N{{\mathbb N}}
\def\Z{{\mathbb Z}}
\def\eps{\varepsilon}
\def\mx{{\bf x}}
\def\tmx{\tilde{{\bf x}}} 
\def\tmy{\tilde{{\bf y}}} 
\def\mz{{\bf z}}
\def\my{{\bf y}}
\def\mz{{\bf z}}
\def\mff{{\mathfrak f}}
\def\mzt {{\zeta}}
\begin{document}

\title[Stochastic CL on Riemannian manifolds]{Well-posedness for stochastic scalar conservation laws on Riemannian manifolds}

\author{N.\ Konatar}\address{Nikola Konatar, University of Montenegro, 
Faculty of Mathematics}\email{nikola.k@ac.me}
\author{D.\ Mitrovic}\address{Darko Mitrovic,
Faculty of Mathematics, Oscar Morgenstern platz 1,
1090 Vienna, Austria}\email{darkom@ac.me}
\author{E.Nigsch}\address{Eduard Nigsch,
Faculty of Mathematics, University of
Vienna, Oscar Morgenstern Pl.1, Vienna, Austria }\email{eduard.nigsch@univie.ac.at}

\subjclass[2010]{35K65, 42B37, 76S99}

\keywords{conservation laws, stochastic, Cauchy problem, Riemannian manifold, kinetic formulation, well-posedness}

\begin{abstract}
We consider the scalar conservation law with stochastic forcing
$$
d u +\mathrm{div}_g {\mathfrak f}(\mx,u) dt= \Phi(\mx,u) dW_t, 
\ \ {\bf x} \in M, \ \ t\geq 0
$$ on a smooth compact Riemannian manifold $(M,g)$ where $W_t$ is the Wiener process and ${\bf x}\mapsto {\mathfrak f}(\mx,\xi)$ is a vector field on $M$ for each $\xi\in \R$. We introduce admissibility conditions, derive the kinetic formulation and use it to prove well posedness.
\end{abstract}
\maketitle

\section{Introduction}

We consider the Cauchy problem for a stochastic scalar conservation law of the form
\begin{align}
\label{main-eq}
du +\Div_g \mff(\mx,u) dt &=\Phi(\mx,u) dW_t, \ \ \mx \in M, \ \ t\geq 0\\
\label{ic}
u|_{t=0} &=u_0(\mx) \in L^\infty(M)
\end{align} on a smooth, compact, $d$-dimensional (Hausdorff) Riemannian manifold $(M,g)$. The object $W$ is the Wiener process which can be finite or infinite dimensional which does not affect the essence of the proofs. Therefore, we shall assume that we work with one-dimensional Wiener process defined on the stochastic basis $(\Omega, {\cal F}, ({\cal F}_t), {\bf P})$. 

We will assume that 

\begin{itemize}

\item the flux $\mff$ satisfies the geometry compatibility conditions and a decay property as follows respectively:
\begin{align}
\label{geomcomp}
&\Div_g \mff({\mx},\xi)=0 \ \ \text{for every $\xi\in \R$}\\
\label{decay}
&\sup\limits_{\lambda \in \R}| \mff(\cdot,\lambda) |\in L^2(M)  \ \ {\rm and} \ \ \| \mff (\cdot,\lambda)\|_{L^\infty(M)}\leq C(1+|\lambda|);
\end{align}

\item the function $\Phi$ is continuously differentiable and it decays to zero at infinity i.e. $\Phi \in C^1_0(M\times \R)$, and 
\begin{equation}
\label{assump-Phi}
\sup\limits_{\lambda \in \R}|\Phi(\cdot,\lambda) \lambda| \in L^1(M).
\end{equation}

\end{itemize}


Nowadays, we are witnessing a rapid development of stochastic conservation laws and related equations. The rising interest to this field of research is motivated by concrete applications in biology, porous media, finances (see e.g. randomly chosen \cite{Allen, kenneth, TNL} and references therein) and, in general, any realistic situation in which we cannot determine parameters precisely (i.e. the coefficients of the equations governing the process).


Moreover, such equations have rich mathematical structure and therefore, they are very interesting and challenging from the mathematical point of view. We have numerous results in different directions beginning with the stochastic conservation laws \cite{CDK, DV, FG, FN, Hof, Kim03, WKMS}, then velocity averaging results for stochastic transport equations \cite{DV-va, LS}, stochastic degenerate parabolic equations \cite{GH, china}. We remark that latter list of references is far from complete.  Here, we aim to expand the theory of stochastic scalar conservation laws to manifolds. As for the stochastic PDEs on manifolds, we mention \cite{BO} where the wave equation was considered.

Let us now briefly recall the meaning of the divergence on a manifold.  We suppose that the map $(\mx,\xi)\mapsto \mff(\mx,\xi)$, 
$M\times \R \to TM$ is $C^1$ and that, for every $ \xi\in \R$,
$\mx \mapsto \mff(\mx,\xi)\in \mathfrak{X}(M)$ (the space of vector fields on $M$).

In local coordinates, we write
\begin{align*}
\mff(\mx,\xi)=(f^1(\mx,\xi),\dots,f^d(\mx,\xi)).
\end{align*}  
The divergence operator appearing in the equation is to be formed with respect to the metric,
so in local coordinates we have (cf.\ \eqref{divx} below):
\begin{equation}
\label{div-g}
\Div_g \mff({\mx},u)=\Div_g \big(\mx \mapsto \mff({\mx},u(t,\mx))\big)=\frac{\pa}{\pa x_k} (f^k({\mx},u(t,\mx))+\Gamma^j_{kj}(\mx) f^k({\mx},u(t,\mx))
\end{equation} where the $\Gamma$-terms are the Christoffel symbols of $g$ and the Einstein summation convention is in effect. 

As we can see, the divergence operator on manifolds is more involved than the one in Euclidean setting. Therefore, in order to prove uniqueness, we need to assume \eqref{geomcomp}. Remark that \eqref{geomcomp} are the incompressibility condition from the fluid dynamics point of view.  Let us briefly explain why. Due to conservation of mass of an incompressible fluid, the density in a control volume changes according to the stochastic forcing
\begin{equation}
\label{1}
\frac{D\rho }{Dt}= \Phi(\mx,\rho) \frac{dW_t}{dt}
\end{equation}where $\rho$ is density of the control volume and $\frac{D\rho }{Dt}=\frac{\pa \rho}{\pa t}+\frac{d\mx}{dt} \cdot \nabla \rho$ is the material derivative for the flow velocity $\frac{d\mx}{dt}=(\frac{dx_1}{dt},\dots,\frac{dx_d}{dt})$. If we assume that the function $\rho$ is smooth, we can rewrite equation \eqref{main-eq} in the form
\begin{equation}
\begin{split}
\frac{\pa \rho}{\pa t}+\pa_\xi \big(\mff(\mx,\xi)\big)\big{|}_{\xi=\rho} \cdot \nabla_g \rho+\diver_g \mff(\mx,\xi)\big{|}_{\xi=\rho} =\Phi(\mx,\rho) \frac{dW}{dt}.
\label{2}
\end{split}
\end{equation} Then, taking as usual 
$
\frac{d\mx}{dt}=\pa_\xi \big(\mff(\mx,\xi)\big)\big{|}_{\xi=\rho}
$ and comparing \eqref{2} and \eqref{1}, we arrive at
$$
\Div_g \mff(\mx,\xi)\big{|}_{\xi=\rho}=0,
$$  which immediately gives what is called the geometry compatibility condition.

Since the equation we consider is a nonlinear hyperbolic equation, its solution in general contains discontinuities and we need to pass to the weak solution concept. However, this induces uniqueness issues as one can in general construct several weak solutions satisfying the same initial data. Thus, in order to isolate the physically admissible one, we need to introduce entropy type admissibility conditions \cite{Kru}. We will first derive them locally and then, using the geometry compatibility conditions, we shall show that the conditions hold globally as well.

Having the admissibility conditions, we can derive the kinetic formulation to \eqref{main-eq} (see \eqref{WeakForm3}). We will use it to prove both existence and uniqueness to the considered Cauchy problem. The strategy of proof is adapted from \cite{DV}. We have tried to be as precise and self contained and intuitive as possible. We therefore proved a simple corollary of the It\^o lemma concerning the derivative of the product of two stochastic processes and derive the uniqueness proof first informally, and then also formally.

The paper is organized as follows. In Section \ref{diffprelsec} we introduce notions and notations from differential geometry and stochastic calculus. We then move on to derive the kinetic formulation of \eqref{main-eq} and heuristically show how to get uniqueness to the solution. In Section \ref{uniquenessec}, we formally prove the uniqueness result. Finally, in Section \ref{existencesec} we show existence of the kinetic solution which in turn implies existence of the entropy admissible solution.

\section{Preliminaries from Riemannian geometry and stochastic calculus}\label{diffprelsec}

We shall split the section into two parts. In the first one, we will provide details from differential geometry, and in the second one, we recall necessary results from stochastic calculus.

\subsection{Riemannian geometry}

Our standard references for notions from Riemannian and distributional geometry are \cite{ GKOS, Mar, ON83, Pet06}. 
As before,
$(M,g)$ will be a $d$-dimensional Riemannian manifold.  If $v$ is a distributional vector field on $M$  
then its gradient $\nabla v$ is the
vector field metrically equivalent to the exterior derivative $dv$ of $v$: $\langle \nabla v, X\rangle = dv(X) = X(v)$
for any $X\in \mathfrak{X}(M)$. 
In local coordinates,
\begin{equation}
\label{nabla}
\nabla v = g^{ij} \frac{\pa v}{\pa x^i} \pa_{j},
\end{equation} 
with $g^{ij}$ the inverse matrix to $g_{ij}=\langle \pa_{x^i},\pa_{x^j}\rangle$.

As for the Laplace-Beltrami operator $\Delta_g$ on $M$, we have for a function $f\in C^2(M)$ in terms of local coordinates
$$
\Delta_g f=\nabla_g^2 f=\frac{1}{\sqrt{|g|}} \pa_i\left( \sqrt{|g|} g^{ij} \pa_j f \right)
$$

Finally, the divergence operator on $M$ is locally defined via the Christofel symbols for a $C^1$ vector field on $X\in \mathcal{T}^1_0=\mathfrak{X}(M)$ 
with local representation $X=X^i\frac{\pa}{\pa x^i}$:

\begin{equation}\label{divx}
\Div X = \frac{\pa X^k}{\pa x^k} + \Gamma^j_{kj}X^k.
\end{equation}


To proceed, we shall need basic notions from the Sobolev spaces on manifolds. 

Since $M$ is a compact manifold, we can define for a fixed $k\in \N$ (keeping in mind the Poincare inequality)
$$
f\in H^k(M) \ \ \Leftrightarrow \ \ \|\nabla_g^k f\|_{L^2(M)} <\infty.
$$ As for for the Sobolev spaces with negative indexes, we have 
$$
f\in H^{-k}(M) \ \ {\rm if} \ \ \exists F\in H^k(M) \ \ \text{such that} \ \ \Delta^{2k}F=f
$$and we define
\begin{equation}
\label{H-1}
\|f\|_{H^{-k}(M)}=\|F\|_{H^k(M)}.
\end{equation} The spaces $H^k(M)$, $k\in \Z$, are Hilbert spaces and we denote by $\{ e_k \}_{k\in \N}$ the orthogonal basis in $L^2(M)$ which is given as the set of eigenfunctions corresponding to the Laplace-Beltrami operator \cite{}:
 $$
 \Delta_g e_k(\mx)=\lambda_k e_k(\mx).
 $$ At the same time, the set $\{ e_k \}_{k\in \N}$ is the basis in $H^s(M)$, $s \in \Z$, according to the density arguments.

Notice that if we have a function $g\in H^{k}(M)$ and we rewrite it in the basis $\{e_k/\|e_k\|_{H^k(M)}\}$:
\begin{equation}
\label{g}
g(\mx)=\sum\limits_{k=1}^\infty g_k e_k(\mx)/\|e_k\|_{H^k(M)}
\end{equation} then
\begin{equation}
\label{gk}
g_k=\int_M g(\mx) \frac{e_k(\mx)}{\|e_k\|_{H^k(M)}}d\mx
\end{equation} which is easily seen by multiplying \eqref{g} by $e_k/\|e_k\|_{H^k(M)}$, integrating the result over $M$ and using the orthogonality of $\{e_k/\|e_k\|_{H^k(M)}\}$. Moreover, 
\begin{equation}
\label{l2}
\|g\|_{H^{k}(M)}=\sum\limits_{k=1}^\infty g_k^2.
\end{equation} It is not difficult to notice that according to the definition of $e_k$ and \eqref{H-1}, we have
\begin{equation}
\label{lambda-k}
\|e_k\|_{L^2(M)}=\sqrt{\lambda_k} \|e_k\|_{H^{-1}(M)}.
\end{equation}

Let us now recall basic notions from stochastic calculus.

\subsection{Stochastic calculus}

What we essentially need from the stochastic calculus is the It\^o lemma and some of its corollaries. To this end, let $X_t$ be a stochastic process satisfying the following stochastic differential equation:
\begin{equation}
\label{StocProc}
dX_t = \mu_1 dt + \sigma_1 dW_t.  
\end{equation} We remark here that the latter equation is actually an informal way of expressing the integral equality
\begin{equation}
\label{integral}
X_{t_0+s}-X_{t_0}=\int_{t_0}^{t_0+s}\mu_1 dt + \int_{t_0}^{t_0+s} \sigma_1 dW_t, \ \ \forall t_0,s >0.  
\end{equation}

By It\^o's lemma, for each twice differentiable scalar function $f=f(t,z)$ the equation 
\begin{align}
\label{ItoFormula}
df(X_t)&=\left( \frac{\p f}{\p t} + \mu_1 \frac{\p f}{\p z} + \frac{\sigma^2_1}{2} \frac{\p^2 f}{\p z^2} \right) dt + \sigma_1 \frac{\p f}{\p z} dW_t
\end{align}
holds.

By taking $f(t,X_t)=X^2_t$, we get
\begin{equation}
\label{ItoFormula1}
d X^2_t= 2 \mu_1 X_t dt + \sigma^2_1 dt + 2 \sigma_1 X_t dW_t. 
\end{equation}

Notice that $2 \mu_1 X_t dt + 2 \sigma_1 X_t dW=2X_t( \mu_1  dt + \sigma_1 dW)=2X_t dX_t$, so \eqref{ItoFormula1} becomes 
\begin{equation}
\label{ItoFormula2}
d X^2_t= 2X_t dX_t + \sigma^2_1 dt.
\end{equation}

Similarly, if $Y_t$ is a stochastic process satisfying the stochastic differential equation
\begin{equation}
\label{StocProc1}
dY_t = \mu_2 dt + \sigma_2 dW_t  
\end{equation}
then 
\begin{align}
\label{ItoFormula3}
d Y^2_t &= 2Y_t dY_t + \sigma^2_2 dt, \\ 
\label{ItoFormula4}
d(X_t+Y_t)^2 &= 2 (X_t+Y_t)d(X_t+Y_t) + (\sigma_1 + \sigma_2)^2 dt.
\end{align}

The left-hand side of \eqref{ItoFormula4} is
\begin{multline}
\label{ItoFormula4L}
d(X_t+Y_t)^2 = d(X^2_t + 2 X_t Y_t + Y^2_t)= d X^2_t + 2 d (X_t Y_t) + d Y^2_t \\ = 2X_t dX_t + \sigma^2_1 dt + 2 d (X_t Y_t) + 2Y_t dY_t + \sigma^2_2 dt,
\end{multline}
and the right side is
\begin{multline}
\label{ItoFormula4R}
2 (X_t+Y_t)d(X_t+Y_t) + (\sigma_1 + \sigma_2)^2 dt  \\ = 2 X_t d X_t + 2 X_t d Y_t + 2 Y_t d X_t + 2 Y_t d Y_t + \sigma^2_1 dt +2 \sigma_1 \sigma_2 dt  + \sigma^2_2 dt.
\end{multline}

By annuling the same terms on the left and right side respectively, and dividing the equation by 2, we get
\begin{equation}
\label{ItoFormulaFinal}
d (X_t Y_t) = X_t d Y_t + Y_t d X_t + \sigma_1 \sigma_2 dt.
\end{equation}

Let us finally recall the It\^o isometry. It holds
\begin{equation*}
E\left[ \left( \int_0^TX_t dW_t \right)^2\right]=E\left[ \int_0^T X_t^2 dt \right].
\end{equation*}

\section{Entropy admissibility and kinetic formulation}


In order to derive the admissibility conditions, we shall, as usual, start with the parabolic approximation to \eqref{main-eq}
\begin{equation}
\label{SCL}
du_\eps+\diver_g (\mff (\mx,u_\eps)) dt=\Phi (\mx,u_\eps) dW_t+\eps \Delta_g u_\eps dt ,~ \mx \in M, \, t\in (0,T)
\end{equation}  where, as before, $\mff =\mff (\mx,\lambda) \in C^1(M\times \R)$ and $(M,g)$ is a $d$-dimensional Rimannian manifold with the metric $g$. We will assume that $W_t$ is a Wiener process and $\Phi \in C^1_0(M\times \R)$. 

Since we are dealing with the stochastic parabolic equation on a manifold, we cannot say anything about the existence of solution to the appropriate Cauchy problem. However, we shall assume that we can find a smooth solution to \eqref{SCL}, \eqref{ic} and prove later that this indeed holds.

Using the It\^o formula, from \eqref{SCL} we get (here and in the sequel, we will set $\mff'(\mx,\xi)=\pa_\xi f(\mx,\xi)$):
\begin{equation}
\label{ISCL}
\begin{split}
d\theta(u_\eps)&=\Big(-\theta'(u_\eps) \mff'(\mx,u_\eps) \cdot \nabla_g u_\eps +\theta'(u_\eps) \Div_g \mff (\mx,\rho)\big|_{\rho=u_\eps}\\&+ \eps \Delta_g \theta(u_\eps)-{\eps}\theta''(u_\eps) |\nabla_g u_\eps|^2  +\frac{\Phi^2(\mx,u_\eps)}{2}\theta'' (u_\eps)\Big)dt+\Phi(\mx,u_\eps)\theta'(u_\eps)dW_t
\end{split}
\end{equation}
for all twice differentiable scalar functions $\theta$.

Using the standard approximation procedure and taking into account convexity of the function $\theta(u)=|u-\xi|_+=\begin{cases}
u-\xi, &u\geq \xi\\
0, & else
\end{cases}$, we know that we can safely plug it into \eqref{ISCL}. After letting $\eps \to 0$ and assuming that $E(|u_\eps(t,\mx)-u(t,\mx)|) \to 0$ as $\eps\to 0$, we get the following distributional inequality:

\begin{equation}
\label{WeakForm}
\begin{split}
{d | u-\xi |_+}&\leq - \mff'(\mx, u) \n_g u \sign_+(u-\xi)dt +\theta'(u) \Div_g \mff (\mx,\rho)\big|_{\rho=u}dt\\&+ \frac{\Phi^2 (\mx,u)}{2} \delta (u-\xi)dt + \Phi (\mx,u) \sign_+ (u-\xi) {dW_t}.
\end{split}
\end{equation}

Taking into account the geometry compatibility condition \eqref{geomcomp}, we have
\begin{equation}
\label{aux1}
\begin{split}
&\mff'(\mx,u)\cdot (\n_g u) \sign_+ (u-\xi) = \diver_g \left(\sign_+ (u-\xi)(\mff (\mx,u)-\mff (\mx,\xi))\right)\\
&+ \sign_+(u-\xi)\diver_g \mff (\mx,\xi)=\diver_g \left(\sign_+ (u-\xi)(\mff (\mx,u)-\mff (\mx,\xi))\right),
\end{split}
\end{equation} and using the Schwartz lemma on non-negative distributions, we conclude that there exists a non-negative stochastic kinetic measure $m$ (to be precised later) such that the equation \eqref{WeakForm} can be written as

\begin{equation}
\label{WeakForm1}
\begin{split}
d |u-\xi|_+=-\diver_g (\sign_+ (u-\xi) (\mff (\mx,u)-\mff (\mx,\xi)))dt+ \frac{\Phi^2 (\mx, u)}{2} \delta (u-\xi) dt& \\+ \Phi (\mx,u) \sign_+(u-\xi) dW_t {-} dm(t,\mx,\xi)dt.&
\end{split}
\end{equation}

Next, we find the partial derivative of the expression given in \eqref{WeakForm1} with respect to $\xi$ to get 

\begin{equation}
\label{WeakForm2}
\begin{split}
d \p_\xi |u-\xi|_+=-\diver_g (-\mff'(\mx,\xi) \sign_+(u-\xi))dt +
\p_\xi \left(\frac{\Phi^2 (\mx,u)}{2} \delta (u-\xi) \right)dt&\\
 + \p_\xi (\Phi (\mx,u) \sign_+(u-\xi) dW_t)-\p_\xi dm.&
\end{split}
\end{equation}
 
Introducing $h(t,x,\xi)=-\p_\xi |u-\xi|_+=\sign_+(u-\xi)$ into \eqref{WeakForm2} gives 

\begin{equation}
\label{WeakForm3}
d h+\diver_g (\mff'(\mx,\xi) h)dt = - \p_\xi \left(\frac{\Phi^2 (\mx, u)}{2} \delta (u-\xi)\right)dt - \p_\xi (\Phi (\mx,u) h dW_t)+\p_\xi dm.
\end{equation}

Notice that 
\begin{equation}
\label{aux2}
\begin{split}
&\p_\xi (\Phi (\mx,u) h d W_t) = \p_\xi (\Phi (\mx,u) \sign_+ (u-\xi)) dW_t= -\Phi (\mx,u) \delta (u-\xi) dW_t\\&= -\Phi(\mx,\xi) \delta (u-\xi) dW_t.
\end{split}
\end{equation}

Using $\frac{\Phi^2 (\mx, u)}{2} \delta (u-\xi)=\frac{\Phi^2 (\mx,\xi)}{2} \delta (u-\xi)$ and \eqref{aux2}, and denoting the measure $-\p_\xi h=\delta (u-\xi)$ by $\nu_{(t,\mx)}(\xi)$, we finally get the weak form of our equation:

\begin{equation}
\label{WeakFormFinal}
 dh+\diver_g (\mff'(\mx, \xi)  h)dt=  - \p_\xi \left(\frac{\Phi^2 (\mx,\xi)}{2} \nu_{(t,\mx)}(\xi) \right)dt + \Phi(\mx,\xi) \nu_{(t,\mx)}(\xi) W_t +\p_\xi dm\,.
\end{equation} We shall call the latter equation {\em the kinetic formulation} of \eqref{main-eq}.

It is important to notice that the function $\ov{h}=1-h$ satisfies 
\begin{equation}
\label{WeakFormConjugate}
d \ov{h}+\diver_g (\mff'(\mx,\xi)  \ov{h}) dt = \p_\xi \left(\frac{\Phi^2 (\mx,\xi)}{2} \nu_{(t,\mx)}(\xi) \right) dt - \Phi(\mx,\xi) \nu_{(t,\mx)}(\xi) dW_t -\p_\xi dm.
\end{equation}

We can now introduce a definition of an admissible solution. Let us first introduce what we meant under the stochastic measure here.

\begin{definition}
\label{StochM}
We say that a mapping $m$ from $\Omega$ into the space of Radon measures on $[0,T]\times M \times \R$ is a stochastic kinetic measure if:
\begin{itemize}


\item for every $\phi \in C_0([0,T]\times M\times \R)$  the action $\langle m, \phi \rangle$ defines a ${\bf P}$-measurable function
$$
\langle m, \phi \rangle : \Omega \to R;
$$

\item for every $\phi\in C_0(M\times \R)$, the process
$$
t\mapsto \int_{[0,t]\times M \times \R} \phi(\mx,\xi) dm(s,\mx,\xi)
$$
\end{itemize}is predictable.
\end{definition}

\begin{definition}
\label{admissibility}
The measurable function $u: [0,T]\times M \times \Omega \to \R $ almost surely continuous with respect to time in the sense that $u(\cdot,\cdot,\omega) \in C(\R^+;{\cal D}'(M))$ for ${\bf P}$-almost every $\omega\in \Omega$ is an admissible stochastic solution to \eqref{main-eq}, \eqref{ic} if 

\begin{itemize}


\item there exists $C_2>0$ such that $E(\esssup\limits_{t\in [0,T]} \|u(t)\|_{L^2(M)})\leq C_2$;

\item the kinetic function $h={\rm sign}_+(u-\xi)$ satisfies \eqref{WeakForm1} with the initial conditions $h(0,\mx,\xi)={\rm sign}_+(u_0(\mx)-\xi) $ in the sense of weak traces and $\ov{h}$ satisfies \eqref{WeakFormConjugate} with the initial conditions $\ov{h}(0,\mx,\xi)=1-{\rm sign}_+(u_0(\mx)-\xi) $ in the sense of weak traces.
\end{itemize}
\end{definition}

We shall also need a notion of the kinetic solution.

\begin{definition}
\label{kineticsol}
{A measurable function} $h=h(t,\mx,\xi,\omega)$, $(t,\mx,\xi,\omega)\in \R^+\times M \times \R\times \Omega$, bounded between zero and one and non-strictly decreasing with respect to $\xi \in \R$ is the stochastic kinetic solution to \eqref{main-eq}, \eqref{ic} if 

\begin{itemize}


\item {There exists a stochastic kinetic measure $m$ such that} $h$ satisfies \eqref{WeakFormFinal} and the initial conditions $h(0,\mx,\xi)={\rm sign}_+(u_0(\mx)-\xi)$ in the sense of weak traces.

\end{itemize}
\end{definition}

Clearly, if we have the admissible solution to \eqref{main-eq}, \eqref{ic} then we have the kinetic solution as well. Interestingly, vice versa also holds which we will show in the next sections.

\section{Informal uniqueness proof -- doubling of variables}

In this section, we shall informally show how to get uniqueness (which paves the way for the existence as well). Formal proof does not essentially differ from the procedure given in this section but one needs to introduce several smoothing procedures which significantly complicates following steps of the proof. We also remark that, in order to simplify the notation, we will denote by $d\mx$ the measure on the manifold instead of usual $d\gamma(\mx)$.

Let $h^1(t,\mx,\xi)$ and $h^2(t,\my,\zeta)$ be two different kinetic solutions to \eqref{main-eq}, \eqref{ic} (see Definition \ref{kineticsol}). Then
\begin{align}
\label{WeakFormN1}
d h^1+\diver_g (\mff'(\mx,\xi) h^1) dt &=  - \p_\xi \left(\frac{\Phi^2 (\mx, \xi)}{2} \nu^1 \right) dt + \Phi(\mx,\xi) \nu^1 dW_t +\p_\xi dm_1, \\
\label{WeakFormN2}
d \ov{h^2}+\diver_g (\mff'(\my,\zeta)  \ov{h^2}) dt &=  \p_\zeta \left(\frac{\Phi^2 (\my,\zeta)}{2} \nu^2 \right) dt - \Phi(\my, \zeta) \nu^2 dW_t-\p_\zeta dm_2.
\end{align}

By \eqref{ItoFormulaFinal}, the following holds:
\begin{equation}
\label{Doubling1}
d (h^1 \ov{h^2}) = h^1 d \ov{h^2} + \ov{h^2} d h^1 - \Phi(\mx,\xi) \Phi(\my,\zeta) \nu^1 \otimes \nu^2 dt.
\end{equation}

Multiplying \eqref{WeakFormN1} by $\ov{h^2}=\ov{h^2}(t,\my,\zeta)$, \eqref{WeakFormN2} by $h^1=h^1(t,\mx,\xi)$, adding them and using the geometry compatibility conditions \eqref{geomcomp}, yields
\begin{align}
\label{Doubling2}
\nonumber
& \ov{h^2} d h^1 + h^1 d \ov{h^2}+ \ov{h^2} \mff'(\mx,\xi) \cdot \n_{g,\mx} h^1 dt + h^1 \mff'(\my,\zeta) \cdot \n_{g,\my} \ov{h^2} dt \\
\nonumber
& = - \ov{h^2} \p_\xi \left(\frac{\Phi^2 (\mx,\xi)}{2} \nu^1 \right) dt + h^1 \p_\zeta \left(\frac{\Phi^2 (\my,\zeta)}{2} \nu^2 \right) dt + \ov{h^2} \Phi(\mx,\xi) \nu^1 dW_t - h^1 \Phi(\my,\zeta) \nu^2 dW_t\\
& +\ov{h^2} \p_\xi dm_1(t,\mx,\xi)-{h}^1 \p_\zeta dm_2(t,\my,\zeta) dt. 
\end{align}

Inserting \eqref{Doubling1} into \eqref{Doubling2}, we get
\begin{align}
\label{Doubling3}
\nonumber
 & d (h^1 \ov{h^2}) + \Phi(\mx,\xi) \Phi(\my,\zeta) \nu^1 \otimes \nu^2 dt + \ov{h^2} \mff'(\mx,\xi) \cdot \n_{g,\mx} h^1 dt + h^1 \mff'(\my,\zeta) \cdot \n_{g,\my} \ov{h^2} dt \\
\nonumber
 & = - \ov{h^2} \p_\xi \left(\frac{\Phi^2 (\mx,\xi)}{2} \nu^1 \right) dt + h^1 \p_\zeta \left(\frac{\Phi^2 (\my,\zeta)}{2} \nu^2 \right) dt +  (\ov{h^2} \Phi(\mx,\xi) \nu^1 - h^1 \Phi(\my,\zeta) \nu^2 )dW_t \\ 
& + \ov{h^2} \p_\xi dm_1(t,\mx,\xi) dt -{h}^1 \p_\zeta dm_2(t,\my,\zeta) dt. 
\end{align}

We now choose the non-negative test function $\varphi(t,\mx,\my,\xi,\zeta)= \rho(\mx-\my) \psi (\xi-\zeta)$, where $\rho$ and $\psi$ are smooth non-negative functions defined on appropriate Euclidean spaces. Multiplying \eqref{Doubling3} with $\varphi$ and integrating over $(0, T) \times M^2 \times \mathbb{R}^2$ we get 




\begin{align}
\label{Main}
&\integral h^1 (T,\mx,\xi) \ov{h^2} (T,\my,\zeta)\rho(\mx-\my) \psi (\xi-\zeta) d\zeta d\xi d\my d\mx  \\&- \int\limits_{{\rm M}^2}\int\limits_{\R^2} h^1_0 \ov{h^2_0} \rho(\mx-\my) \psi (\xi-\zeta)d\zeta d\xi d\my d\mx  \nonumber \\ 
&+ \integra  \rho(\mx-\my) \psi (\xi-\zeta) \Phi(\mx,\xi) \Phi(\my,\zeta) d\nu_{(t,\my)}^2(\zeta) d\nu_{(t,\mx)}^1(\xi) d\my d\mx  dt
\nonumber \\ 
&+\integra \mff'(\mx,\xi) \cdot \nabla_{g,\mx} h^1 (t,\mx,\xi) \ov{h^2}(t,\my,\zeta)  \rho(\mx-\my) \psi (\xi-\zeta) d\zeta d\xi d\my d\mx dt \nonumber \\ 
&+\integra \mff'(\my,\zeta) \cdot \nabla_{g,\my} \ov{h^2}(t,\my,\zeta) h^1 (t,\mx,\xi) \rho(\mx-\my) \psi (\xi-\zeta) d\zeta d\xi d\my d\mx dt \nonumber \\ 
&= \integra \frac{\Phi^2 (\mx,\xi)}{2} \ov{h^2}(t,\my,\zeta) \rho (\mx-\my) \psi' (\xi-\zeta)    d\nu_{(t,\mx)}^1(\xi) d\zeta d\my d\mx dt \nonumber \\ 
&+ \integra \frac{\Phi^2 (\my,\zeta)}{2} h^1 (t,\mx,\xi) \rho (\mx-\my) \psi' (\xi-\zeta) d\nu_{(t,\my)}^2(\zeta) d\xi  d\my d\mx dt \nonumber \\ 
&+ \integra \rho(\mx-\my) \psi (\xi-\zeta) \ov{h^2}(t,\my,\zeta) \Phi(\mx,\xi)  d\nu_{(t,\mx)}^1(\xi) d\zeta d\my d\mx dW_t \nonumber \\
&-  \integra  \rho(\mx-\my) \psi (\xi-\zeta) h^1 (t,x,\xi) \Phi(\my,\zeta) d\nu_{(t,\my)}^2(\zeta) d\xi d\my d\mx  dW_t \nonumber \\
&-\integra \rho(\mx-\my) \psi' (\xi-\zeta) \ov{h^2}(t,\my,\zeta) d m_1(t,\mx,\xi)  d\zeta d\my \nonumber \\ &-\integra  {h}^1(t,\mx,\xi) \rho(\mx-\my) \psi' (\xi-\zeta) d m_2(t,\my,\zeta)  d\xi d\mx.
\nonumber
\end{align}

By using integration by parts with respect to $\zeta$ and $\xi$ in the first and second and in the last two terms on the right hand side in \eqref{Main}, and using $\p_\xi h^1=-\nu^1$ and $\p_\zeta \ov{h^2}=\nu^2$, we obtain:
\begin{align}
\label{Main1}
&\integral h^1 (T,\mx,\xi) \ov{h^2} (T,\my,\zeta) \rho(\mx-\my) \psi (\xi-\zeta) d\zeta d\xi d\my d\mx \\ &- \int\limits_{{\rm M}^2}\int\limits_{\R^2} h^1_0(\mx,\xi) \ov{h^2_0}(\my,\zeta) \rho(\mx-\my) \psi (\xi-\zeta)d\zeta d\xi d\my d\mx \nonumber \\ 
&+ \integra  \rho(\mx-\my) \psi (\xi-\zeta) \Phi(\mx,\xi) \Phi(\my,\zeta) d\nu_{(t,\my)}^2(\zeta) d\nu_{(t,\mx)}^1(\xi) d\my d\mx dt \nonumber
\\ 
&+\integra \mff'(\mx,\xi) \cdot \nabla_{g,\mx} h^1 (t,\mx,\xi) \ov{h^2}(t,\my,\zeta)  \rho(\mx-\my) \psi (\xi-\zeta) d\zeta d\xi d\my d\mx dt \nonumber \\ 
& +\integra \mff'(\my,\zeta) \cdot \nabla_{g,\my} \ov{h^2}(t,\my,\zeta)  h^1 (t,\mx,\xi)  \rho(\mx-\my) \psi (\xi-\zeta) d\zeta d\xi d\my d\mx dt \nonumber \\ 
& = \integra \frac{\Phi^2 (\mx,\xi)}{2}  \rho (\mx-\my) \psi (\xi-\zeta) d\nu_{(t,\my)}^2(\zeta) d\nu_{(t,\mx)}^1(\xi)  d\my d\mx dt \nonumber \\ 
&+ \integra \frac{\Phi^2 (\my,\zeta)}{2}  \rho (\mx-\my) \psi (\xi-\zeta)  d\nu_{(t,\my)}^2(\zeta) d\nu_{(t,\mx)}^1(\xi) d\my d\mx dt \nonumber \\ 
&+ \integra \rho(\mx-\my) \psi (\xi-\zeta) \ov{h^2}(t,\my,\zeta) \Phi(\mx,\xi) d\nu_{(t,\mx)}^1(\xi)d\zeta  d\my d\mx dW_t \nonumber \\  
&- \integra  \rho(\mx-\my) \psi (\xi-\zeta) h^1 (t,\mx,\xi)  \Phi(\my,\zeta) d\nu_{(t,\my)}^2(\zeta)   d\xi d\my d\mx dW_t
\nonumber \\
&-\integra \rho(\mx-\my) \psi(\xi-\zeta) \nu_{(t,\my)}^2(\zeta) d m_1(t,\mx,\xi) d\zeta d\my \nonumber \\ &-\integra  \rho(\mx-\my) \psi (\xi-\zeta) {\nu}_{(t,\mx)}^1(\xi) d m_2(t,\my,\zeta) d\xi d\mx.
\nonumber
\end{align}

Finally, moving the third term on the left hand side in \eqref{Main1} to the right hand side and using non-negativity of the measures $m_{1}$ and $m_2$ yields
\begin{align}
\label{Main2}
&\integral h^1 (T,\mx,\xi) \ov{h^2} (T,\my,\zeta) \rho(\mx-\my) \psi (\xi-\zeta) d\zeta d\xi d\my d\mx\\ &- \int\limits_{{\rm M}^2}\int\limits_{\R^2} h^1_0(\mx,\xi) \ov{h^2_0}(\my,\zeta) \rho(\mx-\my) \psi (\xi-\zeta)d\zeta d\xi d\my d\mx \nonumber \\ 
&+\integra \mff'(\mx,\xi) \cdot \nabla_{g,\mx} h^1 (t,\mx,\xi) \ov{h^2}(t,\my,\zeta) \rho(\mx-\my) \psi (\xi-\zeta) d\zeta d\xi d\my d\mx dt \nonumber \\ 
&+\integra \mff'(\my,\zeta) \cdot \nabla_{g,\my} \ov{h^2}(t,\my,\zeta) h^1 (t,\mx,\xi)  \rho(\mx-\my) \psi (\xi-\zeta) d\zeta d\xi d\my d\mx dt \nonumber
 \\  
&\leq \frac{1}{2} \integra (\Phi(\mx,\xi)-\Phi(\my,\zeta))^2 \rho (\mx-\my) \psi (\xi-\zeta) d\nu_{(t,\my)}^2(\zeta) d\nu_{(t,\mx)}^1(\xi) d\my d\mx  dt \nonumber \\ 
&+ \integra  \rho(\mx-\my) \psi (\xi-\zeta) \ov{h^2}(t,\my,\zeta)\Phi(\mx,\xi) d\nu_{(t,\mx)}^1(\xi)d\zeta d\my d\mx dW_t\nonumber \\ & - \integra  \rho(\mx-\my) \psi (\xi-\zeta) h^1 (t,\mx,\xi) \Phi(\my,\zeta) d\nu_{(t,\my)}^2(\zeta)    d\xi d\my d\mx dW_t. \nonumber
\end{align} Setting $\psi(\xi)=\delta(\xi)$ and $\rho(\mx)=\delta(\mx)$ and rearranging it a bit, we obtain 
\begin{align}
\label{Main5}
\nonumber
& \int\limits_\text{M} \int\limits_\mb{R} h^{1} (T,\mx,\xi) \ov{h^{2}} (T,\mx,\xi) d\xi d\mx \\ 
\nonumber
& \leq  \int\limits_\text{M} \int\limits_\mb{R} h^1_0 \ov{h^2_0} d\xi d\mx -\int\limits_0^\text{T} \int\limits_\text{M} \int\limits_\mb{R} \mff'(\mx,\xi) \cdot \nabla_{g,\mx} (h^1 (t,\mx,\xi) \ov{h^2}(t,\mx,\xi))  d\xi d\mx dt \\
& - \int\limits_0^\text{T} \int\limits_\text{M} \int\limits_\mb{R} \Phi(\mx,\xi) \p_\xi( h^1 (t,\mx,\xi) \ov{h^2}(t,\mx,\xi)) d\xi d\mx dW_t.
\end{align} 
Another integration by parts provides
\begin{align}
\label{Main5-1}
& \int\limits_\text{M} \int\limits_\mb{R} h^{1} (T,\mx,\xi) \ov{h^{2}} (T,\mx,\xi) d\xi d\mx 
\\
\nonumber
& \leq  \int\limits_\text{M} \int\limits_\mb{R} h^1_0 \ov{h^2_0} d\xi d\mx  + \int\limits_0^\text{T} \int\limits_\text{M} \int\limits_\mb{R} \Phi'(\mx,\xi) ( h^1 (t,\mx,\xi) \ov{h^2}(t,\mx,\xi)) d\xi d\mx dW(t)
\end{align} where we used the geometry compatibility conditions to eliminate the flux term.

By using non-negativity of $h^1$ and $\ov{h^2}$, we have after finding expectation of square of \eqref{Main5-1} and taking into account the It\^ o isometry 

\begin{align}
\label{Main5-2}
& E\left[\Big(\int\limits_\text{M} \int\limits_\mb{R} h^{1} (T,x,\xi) \ov{h^{2}} (T,\mx,\xi) d\xi d\mx \Big)^2 \right] \\ 
& \lesssim E \left[\Big( \int\limits_\text{M} \int\limits_\mb{R} h^1_0 \ov{h^2_0} d\xi d\mx  \Big)^2\right]  + \|\Phi'\|^2_{\infty} E\left[\int\limits_0^\text{T} \Big(\int\limits_\text{M} \int\limits_\mb{R} ( h^1 (t,\mx,\xi) \ov{h^2}(t,\mx,\xi)) d\xi d\mx \Big)^2 dt \right].
\nonumber
\end{align}

 From here, using the Gronwall inequality, we get
\begin{equation}
\label{gronwal}
E\left[\Big(\int\limits_\text{M} \int\limits_\mb{R} h^{1} (T,x,\xi) \ov{h^{2}} (T,\mx,\xi) d\xi d\mx\Big)^2 \right] \lesssim  E\left[\Big(\int\limits_\text{M} |u_{10}(\mx)-u_{20}(\mx)| d\mx \Big)^2 \right].
\end{equation}From here, if assume that $u_{10}=u_{20}$, we get almost surely for almost every $(t,\mx,\xi)\in [0,\infty)\times M \times \R$:
\begin{align*}
h^1(t,\mx,\xi)\, (1-h^2(t,\mx,\xi))=0.
\end{align*}

This implies that either $h^1(t,\mx,\xi)=0$ or $h^2(t,\mx,\xi)=1$.  Since we can interchange the roles of $h^1$ and $h^2$, we conclude that $1$ and $0$ are actually the only values that $h^1$ or $h^2$ can attain and that $h^1=h^2=h$. Since $h$ is also non-increasing with respect to $\xi$ on $[0,\infty)$, 
we conclude (taking into account the initial data $h_0={\rm sign}_+(u_0(\mx)-\xi)$) that there exists a function $u:[0,\infty)\times M \to \R$ such that 
\begin{equation}
\label{***} 
h(t,\mx,\xi)={\rm sign}_+(u(t,\mx)-\xi).
\end{equation}  We thus have the following corollary which is proven in the final section. 

\begin{corollary}
\label{Kor}
The stochastic kinetic solution to \eqref{main-eq}, \eqref{ic} has the form \eqref{***}. If the function $u$ satisfies the second item from Definition \ref{admissibility}, then it is an admissible stochastic solution to \eqref{main-eq}, \eqref{ic}.
\end{corollary}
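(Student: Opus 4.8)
The plan is to prove the two assertions separately. For the first, let $h$ be an arbitrary stochastic kinetic solution in the sense of Definition~\ref{kineticsol}, and run the doubling-of-variables argument of Section~\ref{uniquenessec} with $h^1=h^2=h$ and the common initial datum $u_0$. Then $u_{10}=u_{20}=u_0$, so the right-hand side of~\eqref{gronwal} vanishes and we obtain
\begin{equation*}
E\left[\Big(\int_M\int_{\R} h(T,\mx,\xi)\,\big(1-h(T,\mx,\xi)\big)\,d\xi\,d\mx\Big)^2\right]=0\qquad\text{for every }T>0 .
\end{equation*}
Since $0\le h\le 1$ the integrand is non-negative, hence $h(t,\mx,\xi,\omega)\in\{0,1\}$ for a.e.\ $(t,\mx,\xi)$ and ${\bf P}$-a.e.\ $\omega$.

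Next I would exploit the structure of $h$: it is non-increasing in $\xi$, valued in $[0,1]$ and, in view of the initial trace $h(0,\mx,\xi)=\sign_+(u_0(\mx)-\xi)$ together with the finiteness of the kinetic measure $m$, tends to $1$ as $\xi\to-\infty$ and to $0$ as $\xi\to+\infty$. Combined with $h\in\{0,1\}$, this forces $\xi\mapsto h(t,\mx,\xi)$ to be, for a.e.\ $(t,\mx,\omega)$, the indicator of a half-line $\{\xi<u(t,\mx)\}$, whose endpoint is recovered through the measurable formula
\begin{equation*}
u(t,\mx)=\int_0^{\infty} h(t,\mx,\xi)\,d\xi-\int_{-\infty}^{0}\big(1-h(t,\mx,\xi)\big)\,d\xi .
\end{equation*}
The almost sure continuity $u(\cdot,\cdot,\omega)\in C(\R^+;{\cal D}'(M))$ would then follow by testing~\eqref{WeakFormFinal} against regularizations of $\varphi(\mx)\mathbf{1}_{(a,b)}(\xi)$, using the weak-trace continuity inherent in the kinetic formulation, and letting $a\to-\infty$, $b\to+\infty$. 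This establishes that $h$ has the form~\eqref{***}.

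For the second assertion I would add the hypothesis that $u$ satisfies the $L^2$ bound in the second item of Definition~\ref{admissibility} and verify the remaining requirements of that definition for $h=\sign_+(u-\xi)$ and $\ov h=1-h$. The conjugate equation~\eqref{WeakFormConjugate} for $\ov h$ is immediate from $h$ solving~\eqref{WeakFormFinal} and from $\diver_g\big(\mff'(\mx,\xi)\big)=\p_\xi\,\diver_g\mff(\mx,\xi)=0$ (a consequence of~\eqref{geomcomp}), exactly as observed right after~\eqref{WeakFormFinal}. To obtain~\eqref{WeakForm1} I would integrate~\eqref{WeakFormFinal} in the velocity variable over $(\xi,+\infty)$ and use
\begin{align*}
\int_\xi^{\infty}h(t,\mx,\zeta)\,d\zeta&=|u(t,\mx)-\xi|_+,\\
\int_\xi^{\infty}\mff'(\mx,\zeta)\,h(t,\mx,\zeta)\,d\zeta&=\sign_+(u-\xi)\big(\mff(\mx,u)-\mff(\mx,\xi)\big),\\
\int_\xi^{\infty}\p_\zeta\!\left(\tfrac{\Phi^2(\mx,\zeta)}{2}\,\nu_{(t,\mx)}(\zeta)\right)d\zeta&=-\tfrac{\Phi^2(\mx,\xi)}{2}\,\delta(u-\xi),\\
\int_\xi^{\infty}\Phi(\mx,\zeta)\,\nu_{(t,\mx)}(\zeta)\,d\zeta&=\Phi(\mx,u)\,\sign_+(u-\xi),\\
\int_\xi^{\infty}\p_\zeta\,dm(t,\mx,\zeta)&=-dm(t,\mx,\xi),
\end{align*}
where the boundary terms at $+\infty$ vanish because $\Phi\in C^1_0(M\times\R)$ and $m$ is finite; this reproduces exactly~\eqref{WeakForm1}. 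Performing the same integration at $t=0$ turns the kinetic initial trace into $|u-\xi|_+\big|_{t=0}=|u_0(\mx)-\xi|_+$, and the initial trace of $\ov h$ is inherited directly from that of $h$. Together with the assumed $L^2$ bound this verifies Definition~\ref{admissibility}, so $u$ is an admissible stochastic solution.

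I expect the main obstacle to be the first assertion. Turning the informal computation of Section~\ref{uniquenessec} into the quantitative estimate~\eqref{gronwal} in the diagonal case $h^1=h^2$ requires the regularizations of the doubling method --- mollification in $\mx$ along the manifold and in $\xi$, with careful handling of the It\^o terms --- and, once $h\in\{0,1\}$ is established, one still has to justify the integrability and ${\cal D}'(M)$-valued time continuity needed to define $u$. By comparison the second assertion is essentially bookkeeping, the only delicate points being the exchange of the $\zeta$-integration with $\diver_g$ and with the stochastic integral, and the vanishing of the boundary contributions at $\zeta=\pm\infty$.
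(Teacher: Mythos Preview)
Your proposal is correct and follows the paper's own route: the first assertion is precisely the diagonal case $h^1=h^2=h$ of the doubling argument of Sections~4--5 (the rigorous version is Section~\ref{uniquenessec}), and the second is obtained by reversing the derivation of Section~3, integrating \eqref{WeakFormFinal} in the kinetic variable to recover \eqref{WeakForm1} and invoking the geometry compatibility \eqref{geomcomp} for \eqref{WeakFormConjugate}. Your explicit formula for $u$ and your attention to the behaviour at $\xi\to\pm\infty$ and to the $C(\R^+;\mathcal D'(M))$ continuity go slightly beyond what the paper spells out, which is all to the good.
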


\section{Uniqueness -- rigorous proof}
\label{uniquenessec}

In this section, we shall formalize the arguments from the previous section. To this end, it will be necessary to express \eqref{WeakFormFinal} in local coordinates. So, assume we are given a stochastic kinetic solution $h$. To prove uniqueness locally we take a chart $(U,\kappa)$ for $M$ and assume, without loss of generality, that $\kappa(U) = \R^d$. Define the local expression of $h$ as the map (in order to avoid proliferation of symbols, we shall keep the same notations for global and local quantities but we shall write $\tilde{\mx}$ to denote the local variable)
\[  h: \R^+ \times \R^d \times \R \times \Omega \to \R,\quad h(t,\tmx, \xi, \omega) = h ( t, \kappa^{-1}(\tmx), \xi, \omega) G(\tmx), \]
where ${G}(\tmx)$ is the Gramian corresponding to the chart $(U,\kappa)$. Similarly, for $\tmx \in \R^d$ we define
\begin{equation}
\label{conve}
\begin{split}
  {\Phi}(\tilde{\mx},\xi)&=\Phi(\kappa^{-1}(\tilde{\mx}),\xi),  \\
  {\mff}(\tmx,\xi)&=\mff(\kappa^{-1}(\tmx),\xi), \ \ {\mff}'(\tmx,\xi)=\mff'(\kappa^{-1}(\tmx),\xi) =a(\tmx,\xi) \\
  \nu_{(t,\tmx)}(\lambda)&=\nu_{(t,\kappa^{-1}(\tmx))}(\lambda)G(\tmx),
 \end{split}
\end{equation} and $m(t,\tmx,\xi)$ will be the pushforward measure of $m$ with respect to the mapping $\kappa$.

With such notations at hand, we now rewrite \eqref{WeakFormFinal} locally in the chart $(U,\kappa)$ into an equation in terms of $h_1(t,\tmx,\xi)$ and $\ov{h_2}(t,\tmx,\xi)$, which are two kinetic solutions to Cauchy problems corresponding to \eqref{main-eq} with the initial data $u_{10}$ and $u_{20}$, respectively. 
Below, we use the Einstein summation convention and we remind that $a=(a_1,\dots,a_d)=\mff'=(f_1',\dots,f_d')$. Also, since the equations are to be understood in the weak sense, we need to add the Gramian in each of the terms below except in $m_1$ and $m_2$, since the corresponding part in these termes is implied there by the definition of the pushforward measure. This is why we introduce the conventions from \eqref{conve}.

\begin{align}
\label{N1}
&d h^1(t,\tmx, \xi) +\diver_{\tmx} (a(\tmx, \xi)  h^1) dt + h^1 \Gamma^j_{kj}(\tmx) a_k(t,\tmx,\xi) dt \\&=  - \p_\xi \left(\frac{\Phi^2 (\tmx,\xi)}{2} \nu^1_{(t,\tmx)}(\xi) \right) dt + \Phi(\tmx,\xi) \nu^1_{(t,\tmx)}(\xi) dW_t +\p_\xi  dm_1 ,\nonumber\\
\label{N2}
&d \ov{h^2}(t,\tmy, \zeta)  +\diver_{\tmy} (a(\tmy, \zeta)   \ov{h^2}) dt +\ov{h^2} \Gamma^j_{kj}(\tmy)a_k(t,\tmy,\zeta) dt \\
\nonumber &=  \p_\zeta \left(\frac{\Phi^2 (\tmy,\zeta)}{2} \nu^2_{(t,\tmy)}(\zeta) \right) dt - \Phi(\tmy,\zeta)  \nu^2_{(t,\tmy)}(\zeta) dW_t -\p_\zeta dm_2
\end{align} 


We introduce two mollifying functions $\omega_1 \in \mathcal{D}(\mb{R}^d)$, $\omega_2 \in \mathcal{D}(\mb{R})$, where $d$ is the dimension of the manifold ${M}$, such that $\omega_i \geq 0$, $i=1,2$ and $\int_{\mb{R}^d} \omega_1 = \int_\mb{R} \omega_2=1$. Taking $\omega_{\delta, r}(\tmx, \xi)=\frac{1}{r \delta^d} \omega_1\left(\frac{\tmx}{\delta} \right)\omega_2\left(\frac{\xi}{r} \right)$, for some $\delta, r>0$, and using convolution, \eqref{N1} and \eqref{N2} yield (below and in the sequel, subscripts $\delta$ and $r$ denote convolution with respect to the corresponding variables):
\begin{align}
\label{conv}
& d h_{\delta,r}^1 + \diver_{\tmx} (a(\tmx,\xi) h_{\delta,r}^1) dt + g_{\delta, r}^1+\left( \Gamma^j_{kj}(\tmx) a_k(t,\tmx,\xi)h^1\right)_{\delta,r} dt \\
\nonumber
& = - \p_\xi \left(\frac{\Phi^2(\tmx, \xi)}{2} \nu_{(t,\tmx)}^1(\xi) dt \right)_{\delta, r} + (\Phi(\tmx,\xi) \nu_{(t,\tmx)}^1(\xi) dW_t)_{\delta, r} +\pa_\xi dm_{1,\delta,r} ,
\\
\label{conv1}
& d \ov{h_{\delta,r}^2} + \diver_{\tmy} (a(\tmy,\zeta) \ov{h_{\delta,r}^2}) dt + g_{\delta, r}^2+\left( \Gamma^j_{kj}(\tmy) a_k(t,\tmy,\zeta) \ov{h^2}\right)_{\delta,r} dt\\
\nonumber
& = \p_\zeta \left(\frac{\Phi^2(\tmy, \zeta)}{2} \nu_{(t,\tmy)}^1(\zeta) dt \right)_{\delta, r}  - (\Phi(\tmy,\zeta) \nu_{(t,\tmy)}^1(\zeta) dW_t)_{\delta, r}-\pa_\zeta dm_{2,\delta,r}
\end{align} where 
\begin{align*}
g_{\delta, r}^1&= \diver_{\tmx} (a(\tmx,\xi) h^1 dt )_{\delta, r} - \diver_{\tmx} (a(\tmx,\xi) h_{\delta,r}^1) dt\\
g_{\delta, r}^2&= \diver_{\tmy} (a(\tmy,\zeta) \ov{h^2} dt)_{\delta, r} - \diver_{\tmy} (a(\tmy,\zeta) \ov{h_{\delta,r}^2}) dt.
\end{align*}This term converges to zero as $\delta, r \to 0$ according to the Friedrichs lemma \cite{PR}. 

Now, multiplying \eqref{conv} and \eqref{conv1} with $\ov{h_{\delta,r}^2}=\ov{h_{\delta,r}^2}(t,\my,\zeta)$ and $h_{\delta,r}^1=h_{\delta,r}^1(t,\mx,\xi)$, respectively, and using \eqref{ItoFormulaFinal}, we obtain

\begin{align}
\label{MainConv}
 &d (h_{\delta,r}^1 \ov{h_{\delta,r}^2}) + 
(\Phi(\tmx,\xi) \nu_{(t,\tmx)}^1(\xi))_{\delta,r}(\Phi(\tmy,\mzt) \nu_{(t,\my)}^2(\mzt))_{\delta,r} dt  \\
 \nonumber &+\ov{h_{\delta,r}^2} \diver_{\tmx} (a(\tmx,\xi )h_{\delta,r}^1) dt + 
 h_{\delta,r}^1 \diver_{\tmy} (a(\tmy,\mzt)\ov{h_{\delta,r}^2}) dt \\&+ 
 \left( \Gamma^j_{kj}(\tmx) a_k(t,\tmx,\xi)h^1\right)_{\delta,r} \ov{h_{\delta,r}^2} dt +\left( \Gamma^j_{kj}(\tmy) a_k(t,\tmy,\zeta)\ov{h^2} \right)_{\delta,r} h_{\delta,r}^1 dt=
 \nonumber \\
 \nonumber & - 
 g^1_{\delta,r} \ov{h_{\delta,r}^2} -g^2_{\delta,r} h_{\delta,r}^1 +
\ov{h_{\delta,r}^2} (\Phi(\tmx,\xi) \nu_{(t,\tmx)}^1(\xi) dW_t)_{\delta, r} -  
h_{\delta,r}^1 (\Phi(\tmy,\mzt) \nu_{(t,\tmy)}^2(\mzt) dW_t)_{\delta, r}\\
& + h_{\delta,r}^1 \p_\mzt \left(\frac{\Phi^2(\tmy,\mzt)}{2} \nu_{(t,\tmy) dt }^2(\mzt)\right)_{\delta, r} - \ov{h_{\delta,r}^2} \p_\xi \left( \frac{\Phi^2(\tmx,\xi)}{2} \nu_{(t,\tmx) dt}^1(\xi) \right)_{\delta,r}
\nonumber\\
&+\ov{h_{\delta,r}^2} \p_\xi dm_{1,\delta,r}(t,\tmx,\xi) -h_{\delta,r}^1 \p_\zeta dm_{2,\delta,r}(t,\tmy,\zeta). 
\nonumber
\end{align}

Next, we choose non-negative functions $\rho \in \mathcal{D}(\mb{R}^d)$, $\psi, \varphi \in \mathcal{D}(\mb{R})$ such that $\int_{\mb{R}^d} \rho = \int_\mb{R} \psi=1$. Using the test function $\rho_\varepsilon (\tmx-\tmy) \psi_\varepsilon (\xi-\mzt) \varphi \left(\frac{\tmx+\tmy}{2} \right)$, with $\rho_\varepsilon (\tmx)=\frac{1}{\varepsilon^d} \rho\left( \frac{\tmx}{\varepsilon}\right)$, $\psi_\varepsilon (\xi)=\frac{1}{\varepsilon} \psi\left( \frac{\xi}{\varepsilon}\right)$, for some $\varepsilon>0$,  and integrating \eqref{MainConv} over $(0,T)$, the equation is rewritten in the variational formulation (recall that $h^1$ and $h^2$ are continuous with respect to $t\in \R^+$):

\begin{align}
\label{1-1}
 &\int\limits_{\R^{2d}} \int\limits_{\R^2} h_{\delta,r}^1(T,\tmx,\xi) \ov{h_{\delta,r}^2}(T,\tmy,\zeta)\rho_\varepsilon(\tmx-\tmy) \psi_\varepsilon (\xi-\mzt) \varphi \left( \frac{\tmx+\tmy}{2}\right) d\xi d\zeta  d\tmx d\tmy \\
 \label{1-2}
 &-\int\limits_{\R^{2d}} \int\limits_{\R^2} h_{0,\delta,r}^1(\tmx,\xi) \ov{h_{0,\delta,r}^2}(\tmy,\zeta)\rho_\varepsilon(\tmx-\tmy) \psi_\varepsilon (\xi-\mzt) \varphi \left( \frac{\tmx+\tmy}{2}\right) d\xi d\zeta  d\tmx d\tmy 
 \\
 \label{2-1}
 &+ 
  \int\limits_0^\text{T}\int\limits_{\R^{2d}} \int\limits_{\R^2} \big(\ov{h_{\delta,r}^2} \diver_{\tmx} (a(\tmx,\xi )h_{\delta,r}^1) + 
 h_{\delta,r}^1 \diver_{\tmy} (a(\tmy,\mzt)\ov{h_{\delta,r}^2}) \big) \times \\ & \qquad\qquad\qquad\qquad\qquad  \times\rho_\varepsilon(\tmx-\tmy) \psi_\varepsilon (\xi-\mzt) \varphi \left( \frac{\tmx+\tmy}{2}\right) d\xi d\zeta  d\tmx d\tmy dW_t \nonumber \\
 \label{2-2}
 & 
+\int\limits_0^\text{T}\int\limits_{\R^{2d}} \int\limits_{\R^2} \Big(\left( \Gamma^j_{kj}(\tmx) a_k(t,\tmx,\xi)h^1\right)_{\delta,r} \ov{h_{\delta,r}^2}+\left( \Gamma^j_{kj}(\tmy) a_k(t,\tmy,\zeta) \ov{h^2}\right)_{\delta,r} h_{\delta,r}^1\Big)\times \\ &\qquad\qquad\qquad \times  \rho_\varepsilon(\tmx-\tmy) \psi_\varepsilon (\xi-\mzt) \varphi \left( \frac{\tmx+\tmy}{2}\right) d\xi d\zeta  d\tmx d\tmy   \nonumber
  \\
  \label{3-0}
  & = - 
\int\limits_0^\text{T} \int\limits_{\R^{2d}} \int\limits_{\R^2} \Big(  g^1_{\delta,r} \ov{h_{\delta,r}^2}+g^2_{\delta,r} h_{\delta,r}^1 -
\ov{h_{\delta,r}^2} (\Phi(\tmx,\xi) \nu_{(t,\tmx)}^1(\xi) )_{\delta, r} +h_{\delta,r}^1 (\Phi(\tmy,\mzt) \nu_{(t,\tmy)}^2(\mzt) )_{\delta, r}\Big) \times \\& \nonumber \qquad\qquad\qquad \times\rho_\varepsilon(\tmx-\tmy) \psi_\varepsilon (\xi-\mzt) \varphi \left( \frac{\tmx+\tmy}{2}\right) d\xi d\zeta  d\tmx d\tmy dW_t \\
&\label{3-1}
+\int\limits_0^\text{T} \int\limits_{\R^{2d}} \int\limits_{\R^2}
\Big(h_{\delta,r}^1 \p_\mzt \left(\frac{\Phi^2(\tmy,\mzt)}{2} \nu_{(t,\tmy)}^2(\mzt)\right)_{\delta, r} - \ov{h_{\delta,r}^2} \p_\xi \left( \frac{\Phi^2(\tmx,\xi)}{2} \nu_{(t,\tmx)}^1(\xi) \right)_{\delta,r}\\ \nonumber & \qquad\qquad-(\Phi(\tmx,\xi) \nu_{(t,\tmx)}^1(\xi))_{\delta,r}(\Phi(\tmy,\mzt) \nu_{(t,\my)}^2(\mzt))_{\delta,r}  \Big)  \rho_\varepsilon(\tmx-\tmy) \psi_\varepsilon (\xi-\mzt) \varphi \left( \frac{\tmx+\tmy}{2}\right) d\xi d\zeta  d\tmx d\tmy  \\
\label{4-1}
&+\int\limits_0^\text{T} \int\limits_{\R^{2d}} \int\limits_{\R^2} \Big( \ov{h_{\delta,r}^2}(t,\tmy,\zeta) \p_\xi m_{1,\delta,r}(t,\tmx,\xi) - h_{\delta,r}^1(t,\tmy,\xi) \p_\zeta m_{2,\delta,r}(t,\tmy,\zeta) \Big)\times \\& \qquad\qquad\qquad\times\rho_\varepsilon(\tmx-\tmy) \psi_\varepsilon (\xi-\mzt) \varphi \left( \frac{\tmx+\tmy}{2}\right) d\xi d\zeta  d\tmx d\tmy. 
\nonumber
\end{align} We shall analyze this equality term by term. We start with the terms from \eqref{1-1}--\eqref{2-1}. We have:

\begin{align}
\label{LHS-1}
&\int\limits_{\R^{2d}} \int\limits_{\mb{R}^2} h_{\delta, r}^1 (T, \tmx, \xi) \ov{h_{\delta, r}^2} (T,\tmy,\mzt)  \rho_\varepsilon (\tmx-\tmy) \psi_\varepsilon (\xi-\mzt) \varphi \left(\frac{\tmx+\tmy}{2} \right) d\mzt d\xi d\tmy d\tmx  \\
\nonumber
&- \int\limits_{\R^{2d}} \int\limits_{\mb{R}^2} h_{\delta, r}^1 (0, \tmx, \xi) \ov{h_{\delta, r}^2} (0,\tmy,\mzt)  \rho_\varepsilon (\tmx-\tmy) \psi_\varepsilon (\xi-\mzt) \varphi \left(\frac{\tmx+\tmy}{2} \right) d\mzt d\xi d\tmy d\tmx  \\ 
\nonumber
& - \int\limits_0^\text{T} \int\limits_{\R^{2d}} \int\limits_{\mb{R}^2} a(\tmx, \xi) h_{\delta, r}^1 (t, \tmx, \xi) \ov{h_{\delta, r}^2} (t,\tmy,\mzt) \cdot \bigg [ \psi_\varepsilon (\xi-\mzt) \varphi \left(\frac{\tmx+\tmy}{2} \right) \nabla \rho_\varepsilon (\tmx-\tmy)  \\
\nonumber
& \qquad\qquad\qquad\qquad+ \frac{1}{2} \rho_\varepsilon (\tmx-\tmy) \psi_\varepsilon (\xi-\mzt) \nabla \varphi \left(\frac{\tmx+\tmy}{2} \right)  \bigg ] d\mzt d\xi d\tmy d\tmx dt  \\
\nonumber
& +\int\limits_0^\text{T} \int\limits_{\R^{2d}} \int\limits_{\mb{R}^2} a(\tmy, \mzt) h_{\delta, r}^1 (t, \tmx, \xi) \ov{h_{\delta, r}^2} (t,\tmy,\mzt) \cdot \bigg [ \psi_\varepsilon (\xi-\mzt) \varphi \left(\frac{\tmx+\tmy}{2} \right) \nabla \rho_\varepsilon (\tmx-\tmy)  \\
\nonumber
 & \qquad\qquad\qquad\qquad-\frac{1}{2} \rho_\varepsilon (\tmx-\tmy) \psi_\varepsilon (\xi-\mzt) \nabla \varphi \left(\frac{\tmx+\tmy}{2} \right)  \bigg ] d\mzt d\xi d\tmy d\tmx dt 
\\
\nonumber
& = \int\limits_{\R^{2d}} \int\limits_{\mb{R}^2} h_{\delta, r}^1 (T, \tmx, \xi) \ov{h_{\delta, r}^2} (T,\tmy,\mzt) \rho_\varepsilon (\tmx-\tmy) \psi_\varepsilon (\xi-\mzt) \varphi \left(\frac{\tmx+\tmy}{2} \right) d\mzt d\xi d\tmy d\tmx - \\
\nonumber
& - \int\limits_{\R^{2d}} \int\limits_{\mb{R}^2} h_{\delta, r}^1 (0, \tmx, \xi) \ov{h_{\delta, r}^2} (0,\tmy,\mzt) \rho_\varepsilon (\tmx-\tmy) \psi_\varepsilon (\xi-\mzt) \varphi \left(\frac{\tmx+\tmy}{2} \right) d\mzt d\xi d\tmy d\tmx  \\
\nonumber
& - \int\limits_0^\text{T} \int\limits_{\R^{2d}} \int\limits_{\mb{R}^2} (a(\tmx, \xi)-a(\tmy, \mzt)) \cdot  \nabla \rho_\varepsilon (\tmx-\tmy) h_{\delta, r}^1 (t, \tmx, \xi) \ov{h_{\delta, r}^2} (t,\tmy,\mzt)  \psi_\varepsilon (\xi-\mzt) \varphi \left(\frac{\tmx+\tmy}{2} \right) d\mzt d\xi d\tmy d\tmx dt  \\
\nonumber
& - \frac{1}{2} \int\limits_0^\text{T} \int\limits_{\R^{2d}} \int\limits_{\mb{R}^2} (a(\tmx, \xi)+a(\tmy, \mzt)) \cdot \nabla \varphi \left(\frac{\tmx+\tmy}{2} \right) h_{\delta, r}^1 (t, \tmx, \xi) \ov{h_{\delta, r}^2} (t,\tmy,\mzt)  \rho_\varepsilon (\tmx-\tmy) \psi_\varepsilon (\xi-\mzt) d\mzt d\xi d\tmy d\tmx dt
\end{align}

The penultimate term in \eqref{LHS-1} can be rewritten as (below $dV=d\mzt d\xi d\tmy d\tmx dt$):
\begin{align}
\label{Aux1}
& \int\limits_0^\text{T} \int\limits_{\R^{2d}} \int\limits_{\mb{R}^2} (a(\tmx, \xi)-a(\tmy, \mzt)) \cdot \nabla \rho_\varepsilon (\tmx-\tmy) h_{\delta, r}^1 (t, \tmx, \xi) \ov{h_{\delta, r}^2} (t,\tmy,\mzt)  \psi_\varepsilon (\xi-\mzt) \varphi \left(\frac{\tmx+\tmy}{2} \right)dV  = \\
\nonumber
& \int\limits_0^\text{T} \int\limits_{\R^{2d}} \int\limits_{\mb{R}^2} (a(\tmx, \xi)-a(\tmy, \mzt)) \cdot \nabla \left( \frac{1}{\varepsilon^d}\rho \left( \frac{\tmx-\tmy}{\varepsilon} \right)\right) h_{\delta, r}^1 (t, \tmx, \xi) \ov{h_{\delta, r}^2} (t,\tmy,\mzt)  \psi_\varepsilon (\xi-\mzt) \varphi \left(\frac{\tmx+\tmy}{2} \right) dV = \\
\nonumber
& \int\limits_0^\text{T} \int\limits_{\R^{2d}} \int\limits_{\mb{R}^2} (a(\tmx, \xi)-a(\tmy, \mzt)) \cdot \frac{1}{\varepsilon^{d+1}} \nabla \rho( \mz)\Big|_{\mz=\frac{\tmx-\tmy}{\varepsilon}} h_{\delta, r}^1 (t, \tmx, \xi) \ov{h_{\delta, r}^2} (t,\tmy,\mzt)  \psi_\varepsilon (\xi-\mzt) \varphi \left(\frac{\tmx+\tmy}{2} \right)  dV = \\
\nonumber
& \int\limits_0^\text{T} \int\limits_{\R^{2d}} \int\limits_{\mb{R}^2} \frac{a(\tmx, \xi)-a(\tmy, \mzt)}{\varepsilon} \cdot \frac{1}{\varepsilon^{d}} \nabla \rho ( \mz)\Big|_{\mz=\frac{\tmx-\tmy}{\varepsilon}} h_{\delta, r}^1 (t, \tmx, \xi) \ov{h_{\delta, r}^2} (t,\tmy,\mzt)  \psi_\varepsilon (\xi-\mzt) \varphi \left(\frac{\tmx+\tmy}{2} \right)  dV = \\
\nonumber
& \int\limits_0^\text{T} \int\limits_{\R^{2d}} \int\limits_{\mb{R}^2} \frac{a_k(\varepsilon \mz+\tmy, \xi)-a_k(\tmy, \mzt)}{\varepsilon z_k} z_k \pa_{z_k} \rho (\mz) h_{\delta, r}^1 (t, \tmy+\eps \mz, \xi) \ov{h_{\delta, r}^2} (t,\tmy,\mzt)  \psi_\varepsilon (\xi-\mzt) \varphi \left( \tmy + \frac{\varepsilon \mz}{2} \right) dV 
\nonumber
\end{align}where $\mz=\frac{\tmx-\tmy}{\varepsilon}$. We notice that, as $r,\delta,\varepsilon \to 0$ (in any order), this term becomes
\begin{align}
\label{term}
&\int\limits_0^\text{T} \int\limits_{\R^{d}} \int\limits_{\mb{R}} \pa_{\tilde{y}_k} a_k(\tmy,\xi)  h^1 (t, \tmy, \xi) \ov{h^2} (t,\tmy,\xi) \varphi ( \tmy ) \int_{\R^d} z_k \pa_{z_k}\rho (\mz) d\mz \, d\xi d\tmy  dt\\
&= -\int\limits_0^\text{T} \int\limits_{\R^{d}} \int\limits_{\mb{R}} \diver_{\tmy} a(\tmy,\xi)  h^1 (t, \tmy, \xi) \ov{h^2} (t,\tmy,\xi) \varphi ( \tmy ) d\xi d\tmy dt \nonumber\\
&\overset{\eqref{geomcomp}}{=} \int\limits_0^\text{T} \int\limits_{\R^{d}} \int\limits_{\R}  \Gamma^j_{kj}(\tmy) a_k(t,\tmy,\xi)h^1(t,\tmy,\xi) \ov{h_{\delta,r}^2}(t,\tmy,\xi) d\xi d\tmy dt.
\end{align} due to properties of the mollifier $\rho$. Thus, from \eqref{term} and \eqref{LHS-1} we conclude that as $r,\delta,\eps \to 0$ in any order
\begin{align}
&\eqref{1-1}+\eqref{1-2}+\eqref{2-1} \underset{r,\delta,\eps \to 0}{\longrightarrow}  \label{LHS-11} \\
&\int\limits_{\R^{d}} \int\limits_{\R} (h^1 \ov{h^2})(T,\tmy,\xi) \varphi(\tmx) d\xi d\tmx -\int\limits_{\R^{d}} \int\limits_{\R} (h_{0}^1\ov{h_{0}^2})(\tmx,\xi) \varphi(\tmx) d\xi  d\tmx  
 \nonumber\\&- \int\limits_0^\text{T} \int\limits_{\R^{d}} \int\limits_{\mb{R}} a(\tmx, \xi) (h^1  \ov{h^2}) (t, \tmx, \xi)  \nabla \varphi (\tmx)  d\xi  d\tmx dt-\int\limits_0^\text{T} \int\limits_{\R^{d}} \int\limits_{\R}  \Gamma^j_{kj}(\tmx) a_k(t,\tmx,\xi)h^1(t,\tmx,\xi) \ov{h_{\delta,r}^2}(t,\tmx,\xi) d\xi d\tmx dt.
 \nonumber
\end{align}

Term \eqref{2-2} is easy to handle. We simply let $r,\delta,\eps \to 0$ to conclude

\begin{align}
\eqref{2-2}  \underset{r,\delta,\eps \to 0}{\longrightarrow}  \int\limits_0^\text{T} \int\limits_{\R^{d}} \int\limits_{\mb{R}} 2 \Gamma^j_{kj}(\tmx) a_k(t,\tmx,\xi)h^1 \ov{h^2} \varphi(\tmx)  d\xi d\tmx dt. \label{LHS-2}
\end{align}

In order to prepare handling \eqref{3-0} and \eqref{3-1}, we use regularity of the function $\Phi$ (recall that $\Phi \in C^1_0(\R^d\times \R)$). We have 
\begin{align}
\label{Ph1}
&\int\limits_0^\text{T} \int\limits_{\R^{2d}} \int\limits_{\mb{R}^2}  
 \bigg [ \left( \frac{\Phi^2(\tmx,\xi)}{2} \nu_{(t,\tmx)}^1(\xi)  \right)_{\delta, r} \nu_{(t,\tmy), \delta, r}^2 (\mzt) -
 \frac{\Phi^2(\tmx,\xi)}{2} \nu_{(t,\tmx),\delta, r}^1(\xi)  \nu_{(t,\tmy), \delta, r}^2 (\mzt) 
 \bigg ] \times \\&\qquad\qquad\qquad \times \rho_\eps(\tmx-\tmy) \psi_\eps(\xi-\zeta) \varphi \Big( \frac{\tmx+\tmy}{2} \Big) d\mzt d\xi d\tmy d\tmx \underset{r,\delta \to 0}{\longrightarrow} 0,
 \nonumber
\end{align} and similarly
\begin{align}
\label{Ph2}
&\int\limits_0^\text{T} \int\limits_{\R^{2d}} \int\limits_{\mb{R}^2}  
 \bigg[\left( {\Phi(\tmx,\xi)} \nu_{(t,\tmx)}^1(\xi)  \right)_{\delta, r} \left( {\Phi(\tmy,\zeta)} \nu_{(t,\tmy)}^2(\zeta)  \right)_{\delta, r}- 
 \left( {\Phi(\tmx,\xi)} \nu_{(t,\tmx),{\delta, r}}^1(\xi)  \right)\, \left( {\Phi(\tmy,\zeta)} \nu_{(t,\tmy),{\delta, r}}^2(\zeta)  \right)
 \bigg]\times \\& \qquad\qquad\qquad \times \rho_\eps(\tmx-\tmy) \psi_\eps(\xi-\zeta) \varphi \Big( \frac{\tmx+\tmy}{2} \Big) d\mzt d\xi d\tmy d\tmx \underset{r,\delta \to 0}{\longrightarrow} 0.
 \nonumber
\end{align} In a similar fashion, we have

\begin{align*}
&\int\limits_0^\text{T} \int\limits_{\R^{2d}} \int\limits_{\mb{R}^2} \bigg[ \left( \Phi(\tmx,\xi) \nu_{(t,\tmx)}^1(\xi)  \right)_{\delta, r} \ov{h_{\delta, r}^2} (t, \tmy, \mzt) - \left(\Phi(\tmy,\mzt) \nu_{(t,\tmy)}^2(\mzt)  \right)_{\delta, r} h_{\delta, r}^1 (t, \tmx, \xi) \bigg] \times \\
& \qquad\qquad\qquad\times \rho_\varepsilon(\tmx-\tmy) \psi_\varepsilon (\xi-\mzt) \varphi \left( \frac{\tmx+\tmy}{2}\right) d\mzt d\xi d\tmy d\tmx dW_t
\nonumber
\\&= \int\limits_0^\text{T} \int\limits_{\R^{2d}} \int\limits_{\mb{R}^2} \bigg[ \Phi(\tmx,\xi) \nu_{(t,\tmx),\delta, r}^1(\xi)   \ov{h_{\delta, r}^2} (t, \tmy, \mzt) - \Phi(\tmy,\mzt) \nu_{(t,\tmy),\delta, r}^2(\mzt)   h_{\delta, r}^1 (t, \tmx, \xi) \bigg] \times \nonumber \\
& \qquad\qquad\qquad \times \rho_\varepsilon(\tmx-\tmy) \psi_\varepsilon (\xi-\mzt) \varphi \left( \frac{\tmx+\tmy}{2}\right) d\mzt d\xi d\tmy d\tmx dW_t+g_{3,\delta,r}
\nonumber
\end{align*} where $g_{3,\delta,r} \to 0$ as $\delta,r \to 0$. From here, using $\frac{d h^1_{\delta,r}(t,\tmx,\xi)}{\pa \xi} =-\nu_{(t,\tmx),\delta, r}^1(\xi)$ and $\frac{d \ov{h^2_{\delta,r}}(t,\tmy,\zeta)}{\pa \zeta} =\nu_{(t,\tmy),\delta, r}^2(\zeta)$ and integration by parts, we have the following conclusion for \eqref{3-0}
\begin{align}
\label{term2}
&\int\limits_0^\text{T} \int\limits_{\R^{2d}} \int\limits_{\mb{R}^2} \bigg[ \left( \Phi(\tmx,\xi) \nu_{(t,\tmx)}^1(\xi)  \right)_{\delta, r} \ov{h_{\delta, r}^2} (t, \tmy, \mzt) - \left(\Phi(\tmy,\mzt) \nu_{(t,\tmy)}^2(\mzt)  \right)_{\delta, r} h_{\delta, r}^1 (t, \tmx, \xi) \bigg] \times \\
& \qquad\qquad\qquad \times \rho_\varepsilon(\tmx-\tmy) \psi_\varepsilon (\xi-\mzt) \varphi \left( \frac{\tmx+\tmy}{2}\right) d\mzt d\xi d\tmy d\tmx dW_t
\nonumber\\&
\nonumber
\underset{{r,\delta,\eps \to 0}}{\longrightarrow} \int\limits_0^\text{T} \int\limits_{\R^{d}} \int\limits_{\R}  \Phi'(\tmx,\xi) h^1 (t,\tmx,\xi) \ov{h^2}(t,\tmx,\xi) \varphi \left( \tmx \right) d\xi d\tmx dW_t 
\nonumber
\end{align} where we used the procedure leading to \eqref{term}.

Having in mind \eqref{Ph1}, \eqref{Ph2}, and \eqref{term2}, we conclude that \eqref{3-1} has the following asymptotics: 
\begin{align}
\label{RHS}
&\int\limits_0^\text{T} \int\limits_{\R^{2d}} \int\limits_{\mb{R}^2} \left( \frac{\Phi^2(\tmx, \xi)}{2} \nu_{(t,\tmx)}^1(\xi)  \right)_{\delta, r} \ov{h_{\delta, r}^2} (t, \tmy, \mzt) \rho_\varepsilon(\tmx-\tmy) \psi_\varepsilon' (\xi-\mzt) \varphi \left( \frac{\tmx+\tmy}{2}\right) d\mzt d\xi d\tmy d\tmx dt + \\
\nonumber
& + \int\limits_0^\text{T} \int\limits_{\R^{2d}} \int\limits_{\mb{R}^2} \left( \frac{\Phi^2(\tmy,\mzt)}{2} \nu_{(t,\tmy)}^2(\mzt)  \right)_{\delta, r} h_{\delta, r}^1 (t, \tmx, \xi)  \rho_\varepsilon(\tmx-\tmy) \psi_\varepsilon' (\xi-\mzt) \varphi \left( \frac{\tmx+\tmy}{2}\right) d\mzt d\xi d\tmy d\tmx dt - \\
\nonumber
& - \int\limits_0^\text{T} \int\limits_{\R^{2d}} \int\limits_{\mb{R}^2} \left( \Phi(\tmx,\xi) \nu_{(t,\tmx)}^1(\xi)  \right)_{\delta, r}  \left(\Phi(\tmy,\mzt) \nu_{(t,\tmy)}^2(\mzt)  \right)_{\delta, r} \times \\
\nonumber
& \qquad\qquad\qquad\times \rho_\varepsilon(\tmx-\tmy) \psi_\varepsilon (\xi-\mzt) \varphi \left( \frac{\tmx+\tmy}{2}\right) d\mzt d\xi d\tmy d\tmx dt - \\
\nonumber
& - \int\limits_0^\text{T} \int\limits_{\R^{2d}} \int\limits_{\mb{R}^2} \left( \left( \Phi(\tmx,\xi) \nu_{(t,\tmx)}^1(\xi)  \right)_{\delta,r} \ov{h_{\delta, r}^2} (t, \tmy, \mzt) - \left(\Phi(\tmy,\mzt) \nu_{(t,\tmy)}^2(\mzt)  \right)_{\delta, r} h_{\delta, r}^1 (t, \tmx, \xi) \right) \times \\
\nonumber
& \qquad\qquad\qquad \times \rho_\varepsilon(\tmx-\tmy) \psi_\varepsilon (\xi-\mzt) \varphi \left( \frac{\tmx+\tmy}{2}\right) d\mzt d\xi d\tmy d\tmx dW_t \underset{r,\delta,\eps \to 0}{\longrightarrow} \\ \nonumber
&
\lim\limits_{\eps\to 0}\frac{1}{2} \int\limits_0^\text{T} \int\limits_{\R^{2d}} \int\limits_{\R^2} (\Phi(\tmx,\xi)-\Phi(\tmy,\zeta))^2 \rho_\eps(\tmx-\tmy) \psi_\eps (\xi-\zeta)\varphi \left( \frac{\tmx+\tmy}{2}\right) d\nu_{(t,\my)}^2(\zeta) d\nu_{(t,\mx)}^1(\xi) d\my d\mx dt\\ 
&\qquad \qquad -\int\limits_0^\text{T} \int\limits_{\R^{d}} \int\limits_{\R}  \Phi'(\tmx,\xi) \varphi(\tmx) h^1 (t,\tmx,\xi) \ov{h^2}(t,\tmx,\xi) d\xi d\tmx  dW_t
\nonumber\\
&=-\int\limits_0^\text{T} \int\limits_{\R^{d}} \int\limits_{\R}  \Phi'(\tmx,\xi)  \varphi(\tmx) h^1 (t,\tmx,\xi) \ov{h^2}(t,\tmx,\xi) d\xi d\tmx  dW_t.
\end{align} Finally, we want to get rid of the entropy defect measures from \eqref{4-1}. We use the fact that $h^1$ and $h^2$ are decreasing with respect to $\xi$ (i.e. $\zeta$) and that the measures $m_1$ and $m_2$ are non-negative. We have after two integration by parts (keep in mind that $\p_\xi \psi(\xi-\zeta)=-\pa_\zeta\psi(\xi-\zeta)$)
\begin{align}
\label{m1m2}
&\int\limits_0^\text{T} \int\limits_{\R^{2d}} \int\limits_{\R^2} \Big( \ov{h_{\delta,r}^2}(t,\tmy,\zeta) \p_\xi m_{1,\delta,r}(t,\tmx,\xi) -h_{\delta,r}^1(t,\tmy,\xi) \p_\zeta m_{2,\delta,r}(t,\tmy,\zeta) \Big) \times \\&\qquad\qquad\qquad \times \rho_\varepsilon(\tmx-\tmy) \psi_\varepsilon (\xi-\mzt) \varphi \left( \frac{\tmx+\tmy}{2}\right) d\xi d\zeta  d\tmx d\tmy  \nonumber
\\&=-\int\limits_0^\text{T}\int\limits_{\R^{2d}} \int\limits_{\R^2} \Big( \nu^2_{(t,\my),\eps,\delta}(\zeta)  m_{1,\delta,r}(t,\tmx,\xi)+\nu^1_{(t,\mx),\eps,\delta}(\xi)  m_{2,\delta,r}(t,\tmy,\zeta) \Big) \nonumber \times \\&\qquad\qquad\qquad \times \rho_\varepsilon(\tmx-\tmy) \psi_\varepsilon (\xi-\mzt) \varphi \left( \frac{\tmx+\tmy}{2}\right) d\xi d\zeta  d\tmx d\tmy \leq 0.
\nonumber
\end{align}

Finally, from \eqref{LHS-11}, \eqref{LHS-2}, \eqref{term2}, \eqref{RHS}, and \eqref{m1m2}, we conclude after letting $r,\delta, \eps \to 0$ (first $r,\delta\to 0$ and then $\eps\to 0$) that \eqref{1-1}--\eqref{4-1} becomes:

\begin{align*}
&\int\limits_{\R^d} \int\limits_\mb{R} h^{1} (T,\tmx,\xi) \ov{h^{2}} (T,\tmx,\xi) \varphi (\tmx)  d\xi d\tmx + \int\limits_0^\text{T} \int\limits_{\R^d} \int\limits_\mb{R} \Gamma^j_{kj}(\tmx) a_k(t,\tmx,\xi) h^{1} (t,\tmx,\xi) \ov{h^{2}} (t,\tmx,\xi) \varphi (\tmx)  d\xi d\tmx dt\\ 
&\leq  \int\limits_{\R^d} \int\limits_\mb{R} h^1_0 \ov{h^2_0} \varphi (\tmx)  d\xi d\tmx + \int\limits_0^\text{T} \int\limits_{\R^{d}} \int\limits_{\mb{R}} a(\tmx, \xi) \cdot \nabla \varphi (\tmx) (h^1  \ov{h^2}) (t, \tmx, \xi)   d\xi  d\tmx dt \\ 
&+ \int\limits_0^\text{T} \int\limits_{\R^d} \int\limits_\mb{R} \Phi'(\tmx,\xi) ( h^1 (t,\tmx,\xi) \ov{h^2}(t,\tmx,\xi)) \varphi (\tmx) d\xi d\tmx dW_t.
\end{align*} From here, using the definition of the integral over a manifold and recalling \eqref{conve}, we see that it holds

\begin{align}
\label{man-loc-est}
&\int\limits_\text{M} \int\limits_\mb{R} h^{1} (T,\mx,\xi) \ov{h^{2}} (T,\mx,\xi) G(\kappa(\mx)) \varphi (\mx)  d\xi d\mx \\ 
&\leq  \int\limits_\text{M} \int\limits_\mb{R} h^1_0(\mx,\xi) \ov{h^2_0}(\mx,\xi) G(\kappa(\mx)) \varphi (\mx)  d\xi d\mx  - \int\limits_0^\text{T} \int\limits_\text{M} \int\limits_{\mb{R}} (h^1  \ov{h^2}) (t, \mx, \xi) G(\kappa(\mx)) a(\mx, \xi) \cdot \nabla_g \varphi (\mx)  d\xi  d\mx dt \nonumber\\ 
&+ \int\limits_0^\text{T} \int\limits_\text{M} \int\limits_\mb{R} \Phi'(\mx,\xi) ( h^1 \ov{h^2})(t,\mx,\xi)G(\kappa(\mx))  \varphi (\mx) d\xi d\mx dW_t.
\nonumber
\end{align}Since we are on the compact manifold, we can take $\varphi\equiv 1$ which yields: 

\begin{align}
\label{man-loc-est-1}
&\int\limits_\text{M} \int\limits_\mb{R} h^{1} (T,\mx,\xi) \ov{h^{2}} (T,\mx,\xi) G(\kappa(\mx)) d\xi d\mx \\ 
&\leq  \int\limits_\text{M} \int\limits_\mb{R} h^1_0(\mx,\xi) \ov{h^2_0}(\mx,\xi)   G(\kappa(\mx)) d\xi d\mx  - \int\limits_0^\text{T} \int\limits_\text{M} \int\limits_{\mb{R}} (h^1  \ov{h^2}) (t, \mx, \xi) G(\kappa(\mx)) a(\mx, \xi) \cdot \nabla_g 1 \,    d\xi  d\mx dt \nonumber\\ 
&+\int\limits_0^\text{T} \int\limits_\text{M} \int\limits_\mb{R} \Phi'(\mx,\xi) ( h^1  \ov{h^2})(t,\mx,\xi) G(\kappa(\mx))  d\xi d\mx dW_t.
\nonumber
\end{align} We arrived to \eqref{Main5-1} plus a term which does not affect using the Gronwall inequality and It\^ o isometry which give uniqueness as in \eqref{Main5-2}. Remark that the Gramian has no influence on the procedure since it is a positive bounded function.

\section{Existence}\label{existencesec}

Our next aim is to prove that given initial data $u_0\in L^\infty(M)$, 
 there exists a stochastic kinetic solution $\chi$ in the sense of Definition \ref{kineticsol}, with the corresponding kinetic measure $m$. To this end, consider the vanishing viscosity approximation \eqref{SCL} augmented with initial conditions \eqref{ic}. We have the following theorem.
 
\begin{theorem}
\label{t1} 
For any $\eps >0$ the initial value problem \eqref{SCL}, \eqref{ic} with $u_0\in L^2(M)\cap L^\infty(M)$ has a  {stochastic solution} $u_\eps \in L^2([0,T); H^2(M))$. 
It satisfies, for any convex $\theta \in C^2(\R)$ such that $|\theta'(\lambda)| \leq C$, $\lambda \in \R$,

\begin{equation}
\label{app-ec}
\begin{split}
d\theta(u_\eps)\leq \Big(-\Div_g \int_0^{u_\eps} \theta'(v) \mff'(\mx,v) dv + \eps \Delta_g \theta(u_\eps)  +\frac{\Phi^2(\mx,u_\eps)}{2}\theta'' (u_\eps)\Big)dt&\\ +\Phi(\mx,u_\eps)\theta'(u_\eps)dW.&
\end{split}
\end{equation}

\end{theorem}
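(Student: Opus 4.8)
The plan is to prove existence of $u_\eps$ by a Galerkin (spectral) approximation, uniform a priori estimates, a stochastic compactness argument, and a pathwise uniqueness estimate; the inequality \eqref{app-ec} is then read off from It\^o's formula together with convexity of $\theta$ and the geometry compatibility condition \eqref{geomcomp}. (For fixed $\eps>0$ one could alternatively invoke the general theory of locally monotone SPDEs, the operator $v\mapsto\eps\Delta_g v-\diver_g\mff(\mx,v)$ being coercive in the Gelfand triple $H^1(M)\subset L^2(M)\subset H^{-1}(M)$, again thanks to \eqref{geomcomp}.)

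First I would fix $\eps>0$ and work in the $L^2(M)$-orthonormal basis $\{e_k\}$ of eigenfunctions of $-\Delta_g$. Writing $P_n$ for the orthogonal projection onto $\mathrm{span}\{e_1,\dots,e_n\}$, one looks for $u_\eps^n(t)=\sum_{k=1}^n c_k^n(t)e_k$ with
\[ du_\eps^n=P_n\big[\eps\Delta_g u_\eps^n-\diver_g\mff(\mx,u_\eps^n)\big]\,dt+P_n\big[\Phi(\mx,u_\eps^n)\big]\,dW_t,\qquad u_\eps^n(0)=P_nu_0. \]
Since $\mff\in C^1$, $\Phi\in C^1_0$ and all norms on the finite-dimensional space are equivalent, this is an It\^o system for $(c_1^n,\dots,c_n^n)$ with locally Lipschitz coefficients, hence locally well posed; global existence follows from the a priori bounds. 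Applying It\^o's formula to $\|u_\eps^n\|_{L^2(M)}^2$ and using that (i) by \eqref{geomcomp} and integration by parts on the closed manifold $M$ the flux term $\langle u_\eps^n,\diver_g\mff(\mx,u_\eps^n)\rangle$ vanishes — one writes $\mff'(\mx,u_\eps^n)\!\cdot\!\nabla_g u_\eps^n$ as a divergence plus a term that is zero because $\diver_{g,\mx}\mff(\mx,\cdot)\equiv0$ — (ii) $\langle u_\eps^n,\eps\Delta_g u_\eps^n\rangle=-\eps\|\nabla_g u_\eps^n\|^2$, and (iii) $\Phi$ is bounded, one obtains after the Burkholder--Davis--Gundy inequality and Gronwall's lemma, uniformly in $n$,
\[ E\sup_{t\in[0,T]}\|u_\eps^n(t)\|_{L^2(M)}^2+\eps\,E\!\int_0^T\!\|\nabla_g u_\eps^n\|_{L^2(M)}^2\,dt\le C\big(1+\|u_0\|_{L^2(M)}^2\big), \]
and the analogous computation with $|u_\eps^n|^{p-2}u_\eps^n$ in place of $u_\eps^n$ (Moser iteration, again using that $\Phi$ is bounded) gives uniform bounds in $L^p\big(\Omega;L^\infty(0,T;L^p(M))\big)$ for every $p$, whence an $L^\infty$ bound in the limit. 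Finally, It\^o's formula applied to $\|\nabla_g u_\eps^n\|_{L^2(M)}^2$, keeping the good term $-2\eps\|\Delta_g u_\eps^n\|^2$ and absorbing into it the flux contribution $2\langle\Delta_g u_\eps^n,\diver_g\mff(\mx,u_\eps^n)\rangle$ (controlled by the $L^\infty$ bound, which makes $\mff'(\mx,u_\eps^n)$ bounded on the range of $u_\eps^n$) and the It\^o correction $\|\nabla_g P_n\Phi(\mx,u_\eps^n)\|^2\lesssim 1+\|\nabla_g u_\eps^n\|^2$, produces a bound for $u_\eps^n$ in $L^2\big(\Omega;L^2(0,T;H^2(M))\big)$.

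With these estimates I would pass to the limit by the usual stochastic compactness scheme. The $L^2(0,T;H^1(M))$ bound together with the time-regularity of $u_\eps^n$ read off from the equation (drift bounded in $L^2(0,T;H^{-1}(M))$, stochastic integral H\"older in $t$ with values in $L^2(M)$) gives, via Aubin--Lions--Simon, tightness of the laws of $u_\eps^n$ in $L^2(0,T;L^2(M))\cap C([0,T];H^{-1}(M))$; Prokhorov and Skorokhod then provide a new stochastic basis on which a subsequence converges almost surely, and, since $\mff$ and $\Phi$ are continuous and the higher $L^p$ bounds yield uniform integrability, one passes to the limit in every term, obtaining a weak (probabilistic) solution with the stated regularity. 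For fixed $\eps>0$ pathwise uniqueness holds: the difference $w$ of two solutions satisfies, by It\^o on $\|w\|_{L^2(M)}^2$, $d\|w\|^2\le\big(-2\eps\|\nabla_g w\|^2+C_\eps\|w\|^2\big)dt+(\text{martingale})$, the flux difference being bounded by $\|\mff(\mx,u^1)-\mff(\mx,u^2)\|_{L^2}\|\nabla_g w\|_{L^2}\le\tfrac{\eps}{2}\|\nabla_g w\|^2+C_\eps\|w\|^2$ using the local Lipschitz property of $\mff$ and the uniform $L^\infty$ bounds, and the It\^o correction by $\|\Phi(\mx,u^1)-\Phi(\mx,u^2)\|_{L^2}^2\le C\|w\|^2$; Gronwall forces $w\equiv0$. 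By the Gy\"ongy--Krylov criterion, existence of weak solutions together with pathwise uniqueness yields a solution $u_\eps\in L^2([0,T);H^2(M))$ on the original basis $(\Omega,{\cal F},({\cal F}_t),{\bf P})$, to which the whole Galerkin sequence converges.

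It remains to establish \eqref{app-ec}. Since $u_\eps\in L^\infty(0,T;L^2(M))\cap L^2(0,T;H^2(M))$ the process is regular enough that It\^o's formula applies to $\theta(u_\eps)$ for convex $\theta\in C^2(\R)$ with $|\theta'|\le C$, and, exactly as in the derivation of \eqref{ISCL} (where the term $\theta'(u_\eps)\diver_g\mff(\mx,\rho)\big|_{\rho=u_\eps}$ drops out by \eqref{geomcomp}), it gives
\[ d\theta(u_\eps)=\Big(-\theta'(u_\eps)\,\mff'(\mx,u_\eps)\!\cdot\!\nabla_g u_\eps+\eps\Delta_g\theta(u_\eps)-\eps\theta''(u_\eps)|\nabla_g u_\eps|^2+\tfrac{\Phi^2(\mx,u_\eps)}{2}\theta''(u_\eps)\Big)dt+\Phi(\mx,u_\eps)\theta'(u_\eps)\,dW. \]
Because $\diver_{g,\mx}\mff(\mx,\cdot)\equiv0$ by \eqref{geomcomp}, also $\diver_{g,\mx}\int_0^{\xi}\theta'(v)\mff'(\mx,v)\,dv\equiv0$, so the chain rule for the divergence on $M$ yields $\theta'(u_\eps)\,\mff'(\mx,u_\eps)\!\cdot\!\nabla_g u_\eps=\diver_g\!\big(\int_0^{u_\eps}\theta'(v)\mff'(\mx,v)\,dv\big)$; substituting this and discarding the non-positive term $-\eps\theta''(u_\eps)|\nabla_g u_\eps|^2\le0$ (convexity of $\theta$) gives precisely \eqref{app-ec}. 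The main obstacle in the whole argument is the a priori analysis combined with the compactness/uniqueness machinery: one must squeeze out enough integrability — in particular the $L^\infty$ (or high-$L^p$) bound — to control the genuinely nonlinear first-order term $\mff'(\mx,u)\!\cdot\!\nabla_g u$ both in the $H^2$ estimate and in the pathwise-uniqueness computation, and then run the Skorokhod and Gy\"ongy--Krylov arguments correctly; on the manifold one must additionally keep careful track of the Christoffel and Gramian factors and verify that the integration-by-parts identities annihilating the flux contributions are legitimate, which uses both \eqref{geomcomp} and the absence of boundary terms on the compact $M$.
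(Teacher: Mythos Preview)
Your argument is correct and starts from the same Galerkin scheme with Laplace--Beltrami eigenfunctions as the paper, but the passage to the limit is organised differently. The paper derives only an $L^2(0,T;H^1(M))$ energy estimate at the Galerkin level, then obtains a $C^{1/2}([0,T);H^{-1}(M))$ bound on the time increments directly from the integral equation, interpolates into $H^{s/2}([0,T);H^{1-s}(M))$, and invokes the Rellich theorem to extract a limit $u$ with $E\|u_n-u\|_{L^2}\to0$; the $H^2$ regularity is then claimed a posteriori from local parabolic regularity for the limit equation rather than proved uniformly at the approximate level. Your route is the more standard stochastic one: you obtain the $H^2$ (and $L^p$) bounds already for $u_\eps^n$, establish tightness via Aubin--Lions--Simon, pass to the limit through Prokhorov--Skorokhod, and then recover a solution on the original basis by pathwise uniqueness and the Gy\"ongy--Krylov criterion. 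This buys you a cleaner justification of convergence in the stochastic setting and avoids the somewhat informal appeal to deterministic Rellich compactness in expectation; the paper's approach, on the other hand, is lighter in that it bypasses Skorokhod representation and martingale-solution machinery entirely. The derivation of \eqref{app-ec} is identical in both: apply It\^o's formula as in \eqref{ISCL}, use \eqref{geomcomp} to rewrite the transport term as a divergence, and drop the non-positive dissipation $-\eps\theta''(u_\eps)|\nabla_g u_\eps|^2$.
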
 
\begin{proof}
We shall use the Galerkin approximation in order to first get approximate solutions to \eqref{SCL}, \eqref{ic}, and then we will prove that the sequence of approximate solutions converges along a subsequence toward a solution to \eqref{SCL}, \eqref{ic} satisfying \eqref{app-ec}. To this end, we fix an orthonormal  basis $\{e_k\}_{\N}$ in $L^2(M)$ consisted from eigenfunctions of the Laplace-Beltrami operator (see the subsection on the Riemannian manifolds).

We then look for the approximate solution in the form (after the next formula, we shall omit the stochastic variable to simplify the notation)
\begin{equation}
\label{un}
u_n=\sum\limits_{k=1}^n \alpha^n_k (t,\omega) e_k(\mx), \ \ t\in [0,T), \ \ \mx\in M, \ \ \omega\in \Omega.
\end{equation} We look for the functions $\alpha_k$, $k=1,\dots,n$, so that \eqref{SCL} is satisfied on the subspace of $L^2([0,T)\times \R^n)$ in the sense that almost surely it holds
\begin{equation}
\label{app-1}
\begin{split}
\int_M d u_n \varphi d\gamma(\mx)&= \int_M \mff(\mx,u_n) \nabla_g \varphi \, d\gamma(\mx) dt\\
&+\eps \int_M  u_n \, \Delta_g \varphi d\gamma(\mx) dt+\int_M \Phi (\mx,u_n) \varphi d\gamma(\mx) dW_t, \ \ \varphi\in Span\{e_k\}_{k=1,\dots,n}.
\end{split}
\end{equation} If we put here $\varphi=e_j$, $j=1,\dots, n$, using orthogonality of $\{e_k\}_{k\in \N}$, we get the following system of stochastic ODEs:
\begin{equation}
\label{system}
\begin{split}
d {\alpha}_j&=\int_M \mff(\mx,u_n) \nabla_g e_j(\mx) \, d\gamma(\mx) dt\\
&+\eps \lambda_j \alpha^n_j \int_M | e_j |^2 d\gamma(\mx) dt+\int_M \Phi (\mx,u_n) e_j(\mx) d\gamma(\mx) dW_t.
\end{split}
\end{equation} Since we assumed in \eqref{decay} that $\|\mff(\cdot,\lambda)\|_{L^\infty(M)} \leq C |\lambda|$ and $\Phi\in C^1_0(M \times \R) $, we know that \eqref{system} augmented with finite initial data has globally defined solution \cite{Oks}. In particular, we take
\begin{equation}
\label{id-fin}
\alpha_j(0,\omega)=\alpha_{j0}(\omega), 
\end{equation}for the coefficients $\alpha_{j0}$, $j\in \N$ of the initial data $u_0$ in the basis $\{e_k\}_{k\in \N}$:
$$
u_0(\mx,\omega)=\sum\limits_{k\in \N} \alpha_{j0}(\omega) e_j(\mx).
$$ Thus, we have obtained the sequence $(u_n)$ satisfying for every $n\in \N$ relation \eqref{system}. By taking $\varphi=u_n$ in \eqref{app-1} and using the procedure leading from \eqref{SCL} to \eqref{ISCL} with $\theta(u)=u^2/2$ to handle the stochastic part, we conclude:
\begin{align*}
\frac{1}{2} \int_M d |u_n|^2 d\gamma(\mx) &=\int_{M} \mff(\mx,u_n) \cdot \nabla_g u_n d\gamma(\mx)dt \\
&-{\eps}\int_M |\nabla_g u_n|^2 d\gamma(\mx)dt  +\int_M {\Phi^2(\mx,u_n)} d\gamma(\mx) dt+2\int_M \Phi(\mx,u_n) u_n d\gamma(\mx) dW_t.
\end{align*}Rewriting the latter expression in the integral form \eqref{integral}, squaring the expression, and using the Young inequality and the It\^{o} isometry, we get for a constant $C>0$:
\begin{equation}
\label{H1}
\begin{split}
&E\left[ \left(\int_M \int_0^T \frac{d |u_n|^2(t,\mx)}{2} dt d\gamma(\mx) \right)^2\right] +\eps^2 E\left[ \left(\int_0^T \int_M |\nabla u_n|^2 d\gamma(\mx) dt \right)^2 \right] \\
&\lesssim \Big(  E \left[ \left(\int_0^T\int_{M} \mff(\mx,u_n) \cdot \nabla_g u_n d\gamma(\mx) dt\right)^2 \right]
 +E \left[ \int_0^T\int_M \Phi^2(\mx,u_n) |u_n|^2(t,\mx) d\gamma(\mx) dt \right]\Big).
\end{split}
\end{equation} Now, since 
$$
E\left[ \left(\int_M \int_0^T \frac{d |u_n|^2(t,\mx)}{2} dt d\gamma(\mx) \right)^2\right]=E\left[ \left(\int_M \frac{|u_n|^2(T,\mx)}{2} d\gamma(\mx)-\int_M \frac{|u_0|^2(\mx)}{2} d\gamma(\mx) \right)^2 \right]
$$ and $\sup\limits_{\lambda \in \R} |\Phi^2(\mx,\lambda) |\lambda|^2 \in L^1(M)$ according to \eqref{assump-Phi}, we conclude from \eqref{H1}, using the Cauchy-Schwartz inequality and \eqref{decay}:
\begin{equation}
\label{H2}
\begin{split}
E\left[ \left(\int_M \frac{|u_n|^2(T,\mx)}{2}  d\gamma(\mx) \right)^2\right]+\eps E\left[\int_M |\nabla u_n|^2 d\gamma(\mx)\right] \leq C.
\end{split}
\end{equation}

Now, we need to estimate the $t$-variation of $(u_n)$. It is usual to find an $H^\alpha([0,T);H^{-s}(M))$-type estimate for some $s>0$ and then to interpolate with the obtained $L^2([0,T);H^1(M))$ estimate from the above. We proceed in this direction. First, take into account the integral formulation of the stochastic differential equation \eqref{app-1} given by \eqref{system}. We have in the weak sense in ${\cal D}'(M)\cap Span\{e_k\}_{k=1,\dots,n}$ (we omit the stochastic variable again):

\begin{align*}
&\int_M \left( u_n(t+\Delta t,\mx)-u_n(t,\mx) \right) \varphi(\mx)d\mx= 
\int_t^{t+\Delta t} \int_M \mff(\mx,u_n) \nabla_g \varphi(\mx) \, d\gamma(\mx) dt\\
&-\eps \int_t^{t+\Delta t} \int_M \nabla_g u_n \, \nabla_g \varphi(\mx) d\gamma(\mx) dt+\int_M \int_t^{t+\Delta t} \Phi (\mx,u_n) \varphi(\mx) d\mx dW_t, \ \ \varphi\in Span\{e_k\}_{k=1,\dots,n}.
\end{align*} Now, remark that if $\{e_k\}_{k\in \N}$ is an orthonormal basis in $L^2(M)$ then $\{\sqrt{\lambda_k} e_k\}_{k\in \N}$ is an orthonormal basis in $H^{-1}(M)$. This in turn implies that if $u_n=\sum\limits_{k=1}^n \alpha_k^n e_k(\mx)$ then 

\begin{equation}
\label{norm-rule}
\|u_n \|_{H^{-1}(M)} =\left( \sum\limits_{k=1}^n \frac{\left( \alpha_k^n \right)^2}{\lambda_k} \right)^{1/2}. 
\end{equation} Therefore, we choose above $\varphi(t,\mx)=e_k(\mx)/\sqrt{\lambda_k}$ to get:

\begin{align*}
&\frac{ \alpha^n_k(t+\Delta t)-\alpha^n_k(t)}{\sqrt{\lambda_k}}= -
\int_t^{t+\Delta t} \int_M \Div_g \mff(\mx,u_n)  \frac{e_k(\mx)}{\sqrt{\lambda_k}} \, d\gamma(\mx) dt'\\
&+\eps \int_t^{t+\Delta t} \int_M \Delta_g u_n \, \frac{ e_k(\mx)}{\sqrt{\lambda_k}} d\gamma(\mx) dt'+ \int_t^{t+\Delta t} \int_M \Phi (\mx,u_n) \frac{e_k(\mx)}{\sqrt{\lambda_k}} d\mx dW_{t'}, \ \ \varphi\in Span\{e_k\}_{k=1,\dots,n}.
\end{align*} We square the latter expression, find the expectation, and use the Cauchy-Schwartz and Jensen inequalities to infer:

\begin{align*}
&E\left[\frac{\left( \alpha^n_k(t+\Delta t)-\alpha^n_k(t) \right)^2}{{\lambda_k}} \right] \lesssim \Big( \Delta t E\left[ \int_t^{t+\Delta t}\left( \int_M \Div_g f(\mx,u_n) \frac{e_k(\mx)}{\sqrt{\lambda_k}} d\gamma(\mx) \right)^2 dt'\right] \\
&+ \eps^2 \Delta t E\left[ \int_t^{t+\Delta t} \left( \Delta_g u_n(t,\mx) \frac{e_k(\mx)}{\sqrt{\lambda_k}} \right)^2 dt'\right]+E\left[\left(\int_t^{t+\Delta t}\int_M \Phi(\mx,u_n)\frac{e_k}{\sqrt{\lambda_k}} d\gamma(\mx) dW_{t'} \right)^2\right]\Big).
\end{align*} We divide the expression by $\Delta t$, use here the Ito isometry and sum the expression over $k=1,\dots,n$. We have after taking into account \eqref{norm-rule}:
\begin{align*}
&E\big[\|u_n\|^2_{C^{1/2}([0,T);H^{-1}(M)}\big] \\
& \lesssim \Big( E\big[\int_t^{t+\Delta t}\|\Div_g f(\cdot, u_n(t',\cdot))\|^2_{H^{-1}(M)} dt' \big]+\eps^2  E\big[ \int_t^{t+\Delta t} \|\Delta_g u_n(\cdot,t')\|^2_{H^{-1}(M)}dt'\big]
\\&+ E \big[\frac{1}{\Delta t} \int_t^{t+\Delta t} \|\phi(\cdot,u_n(t,\cdot)) \|_{H^{-1}(M)} dt'\big]\Big),
\end{align*}and from here, since $E\big[\|\Delta_g u_n\|_{H^{-1}(M)}\big]=E\big[\| \nabla_g u_n \|_{L^2(M)}\big] \leq c <\infty$ according to \eqref{H2}, 
\begin{equation}
\label{H-t}
E\big[\|u_n\|^2_{C^{1/2}([0,T);H^{-1}(M)}\big] \leq c<\infty \ \ \implies \ \ E\big[\|u_n\|^2_{H^{1/2}([0,T);H^{-1}(M))}\big] \leq \tilde{c}<\infty.
\end{equation} Now, from \eqref{H2} and \eqref{H-t} and the interpolating between $L^2([0,T);H^{-1}(M))$ and $H^{1/2}([0,T);H^{-1}(M))$ we see that for any $s\in (0,1)$
$$
E\big[ \|u_n\|^2_{H^{s/2}([0,T);H^{1-s}(M)} \big] \leq C<\infty.
$$ From here, taking $s$ small enough, we conclude according to the Rellich theorem that $(u_n)$ is compact in $L^2([0,T)\times M)$ in the sense that there exists a stochastic function $u\in L^2([0,T)\times M)$ such that 
$$
E(\|u_n-u\|_{L^2([0,T)\times M)})\to 0 \ \ {\rm as} \ \ n\to \infty
$$ along a subsequence.

The function $u$ is a weak solution to \eqref{SCL}, \eqref{ic}. Since the equation \eqref{SCL} is locally strictly parabolic, it is also locally strictly parabolic and, using its local formulation, it is a standard issue (see e.g. \cite{Hof} for the stochastic situation) to conclude about $L^2([0,T);H^2(M))$-a.e. regularity of $u$. This immediately implies \eqref{app-ec-1} (see the derivation of \eqref{ISCL}; since the equation is linear with respect to $d u$, we do not need more regularity of $u$ with respect to $t$). \end{proof} 

From \eqref{app-ec}, it is not difficult to derive the kinetic formulation for \eqref{SCL}. Then, letting the approximation parameter $\eps\to 0$, we reach to the kinetic solution (Definition \ref{kineticsol}) to \eqref{main-eq}. Indeed, taking $\theta(u_\eps)=|u_\eps-\xi|_+$ in \eqref{app-ec} (as in Section 3) and remembering Schwatz lemma on non-negative distributions, we get for a non-negative measure stochastic $m_\eps$ after finding derivative with respect to $\xi$:
\begin{align}
\label{E1}
& d{\rm sign}_+(u_\eps-\xi)+\diver_g (\mff'(\mx, \xi)  {\rm sign}_+(u_\eps-\xi)) dt\\
&= \eps \Delta_g ({\rm sign}_+(u_\eps-\xi)) dt  -\p_\xi \left(\frac{\Phi^2 (\mx,\xi)}{2} \nu_\eps \right) dt + \Phi(\mx,\xi) \nu_\eps dW_t +\pa_\xi m_\eps,
\nonumber
\end{align} with $\nu_\eps=-\p_\xi {\rm sign}_+(u_\eps-\xi)$. Finally, taking a weak-$\star$ limit of $({\rm sign}_+(u_\eps-\xi))$ along a subsequence (denoted by $h$), we reach to \eqref{WeakFormFinal}. According to the standard procedure \cite{Dpe} presented in the proof of Theorem \ref{mainthm} below, we conclude that there exists $u$ such that $h(t,\mx,\xi)={\rm sign}_+(u-\xi)$ for a unique entropy admissible solution $u$ of \eqref{main-eq}, \eqref{ic}. Thus, we have the basic steps of the proof to the existence theorem. Before we prove it, we need the following simple lemma.

\begin{lemma}
\label{lconv-1}
Assume that  $E(\|u_\eps\|_{L^2(U)}) \leq C$, $U \subset\subset \R^+\times M$. 
Define
$$
U_n^l=\{ (t,\mx) \in U:\, E(|u_{n}(t,\mx,\cdot)|) > l \}.
$$ Then
\begin{equation}
\label{conv11}
\lim\limits_{l\to \infty} \sup\limits_{n\in \N} {\rm
meas}(U_n^l) =0.
\end{equation}

\end{lemma}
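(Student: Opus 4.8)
This is a soft statement that reduces to the Chebyshev and H\"older inequalities; the only bookkeeping issue is the uniformity in $n$, which is already built into the hypothesis (which I read with the running index $n$ in place of $\eps$, i.e.\ $\sup_{n\in\N}E(\|u_n\|_{L^2(U)})\le C$, in accordance with the estimates obtained in the proof of Theorem \ref{t1}). The plan is as follows.

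First I would introduce the deterministic function $v_n(t,\mx):=E(|u_n(t,\mx,\cdot)|)$ on $U$. Since $(t,\mx,\omega)\mapsto u_n$ is jointly measurable, since $U\subset\subset\R^+\times M$ has finite measure, and since $E(\|u_n\|_{L^2(U)})<\infty$, Tonelli's theorem together with the Cauchy--Schwarz inequality on $U$ shows that $v_n\in L^1(U)$ with $\|v_n\|_{L^1(U)}=E(\|u_n\|_{L^1(U)})\le |U|^{1/2}E(\|u_n\|_{L^2(U)})\le C|U|^{1/2}$; in particular $v_n$ is finite a.e.\ on $U$, and by definition $U_n^l=\{(t,\mx)\in U:\ v_n(t,\mx)>l\}$.

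Next I would apply the Chebyshev inequality in the $(t,\mx)$ variables and combine it with the bound just obtained:
$$
{\rm meas}(U_n^l)\ \le\ \frac1l\int_U v_n(t,\mx)\,dt\,d\mx\ =\ \frac1l\,E\Big(\int_U |u_n(t,\mx,\cdot)|\,dt\,d\mx\Big)\ \le\ \frac{|U|^{1/2}}{l}\,E\big(\|u_n\|_{L^2(U)}\big)\ \le\ \frac{C|U|^{1/2}}{l}.
$$
Since the right-hand side is independent of $n$, taking the supremum over $n\in\N$ and then letting $l\to\infty$ yields \eqref{conv11}. I do not expect any genuine obstacle here: the estimate is essentially one line of Chebyshev, and the only care needed is to justify measurability and finiteness of $v_n$ (via Tonelli and $|U|<\infty$) and to keep the constant $C$ independent of $n$, which is guaranteed by the uniform $L^2$ bound of Theorem \ref{t1}.
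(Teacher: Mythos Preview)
Your proof is correct and follows essentially the same route as the paper: pass from the uniform $L^2$ bound to a uniform $L^1$ bound on $E(|u_n|)$ via Cauchy--Schwarz (using $|U|<\infty$), then apply Chebyshev's inequality to the level sets $U_n^l$ to obtain a bound $C|U|^{1/2}/l$ independent of $n$. Your write-up is in fact cleaner than the paper's, which leaves the H\"older/Tonelli step implicit.
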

\begin{proof}
Since $(E(|u_{n}|^2))$ is bounded in and $U\subset\subset \R^+\times M$ it also holds $(E(|u_{n}|))$ is bounded. Thus, we have
\begin{align*}
&\sup\limits_{n\in \N} \int_{\Omega }E(|u_{n}(t,\mx,\cdot)|) dt d\mx \geq
\sup\limits_{n\in \N} \int_{\Omega_n^l} l dt d\mx\, \implies \\ & \implies \frac{1}{l}\sup\limits_{k\in \N} \int_{\Omega }|E(u_{n}(t,\mx))| d\mx \geq
\sup\limits_{n\in \N} {\rm meas}(\Omega_n^l),
\end{align*} implying \eqref{conv1}.
\end{proof}
Before we pass to the proof of the theorem, we need a notion of the truncation operator \cite{Dolz}
$$
T_N(z)=\begin{cases}
z, & |z|<N\\
N, &z\geq N\\
-N, & z\leq -N
\end{cases}.
$$ It is by now well known that if we can prove that if for the sequence $(u_n)$ bounded in $L^p$, $p>1$, the sequence of its truncation $(T_N(u_n))$ converges in $L^1_{loc}$ for every $N\in \N$, then the sequence itself converges in $L^1_{loc}$ as well \cite[]{Dolz}.

\begin{theorem}
\label{mainthm}
For any $u_0\in L^\infty(M)$ there exists a unique admissible stochastic solution to \eqref{main-eq}, \eqref{ic}.
\end{theorem}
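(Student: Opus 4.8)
The plan is to deduce the theorem by combining the uniqueness already obtained in Section~\ref{uniquenessec} with an existence proof based on the vanishing viscosity approximation of Theorem~\ref{t1}. Uniqueness is immediate: by Corollary~\ref{Kor} every admissible stochastic solution $u$ produces, via $h=\mathrm{sign}_+(u-\xi)$, a kinetic solution in the sense of Definition~\ref{kineticsol}; two kinetic solutions issued from the same initial datum satisfy the doubling estimate \eqref{gronwal}, which forces $h^1\overline{h^2}=0$ almost surely, and then the argument around \eqref{***} gives $h^1=h^2=\mathrm{sign}_+(u-\xi)$ for a common $u$. Hence at most one admissible solution exists, and it remains to construct one.

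For existence, fix $u_0\in L^\infty(M)\subset L^2(M)$ and let $(u_\eps)_{\eps>0}$ be the solutions of \eqref{SCL}, \eqref{ic} furnished by Theorem~\ref{t1}. Writing the associated kinetic formulation \eqref{E1} with kinetic measure $m_\eps\ge 0$ and $\nu_\eps=\delta(u_\eps-\xi)$, the energy bound \eqref{H2} gives $E\big[\esssup_{t\in[0,T]}\|u_\eps(t)\|_{L^2(M)}^2\big]\le C$ uniformly in $\eps$, while testing \eqref{E1} against a suitable function of $\xi$ (as in the derivation of \eqref{H2}) yields $E\big[m_\eps([0,T]\times M\times\R)\big]\le C$. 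Along a subsequence one then has $h_\eps:=\mathrm{sign}_+(u_\eps-\xi)\overset{\ast}{\rightharpoonup}h$ in $L^\infty$, $\nu_\eps\overset{\ast}{\rightharpoonup}\nu$ and $m_\eps\overset{\ast}{\rightharpoonup}m$ as measures, and $\eps\Delta_g h_\eps\to 0$ in $\mathcal D'$. The terms of \eqref{E1} that are affine in $h_\eps$ and $\nu_\eps$ pass to the limit at once, since in the kinetic formulation $\Phi(\mx,\xi)$ and $\tfrac12\Phi^2(\mx,\xi)$ no longer depend on $u_\eps$. For the remaining passages I would use the strong convergence produced in the proof of Theorem~\ref{t1}, namely $u_\eps\to u$ in $L^2([0,T)\times M)$ in expectation (obtained, if necessary, from the truncations $T_N(u_\eps)$ via Lemma~\ref{lconv-1} and the truncation principle recalled just before the theorem); this identifies $\nu_\eps\to\delta(u-\xi)$ in the relevant topology and lets one pass to the limit in the martingale term $\int \Phi(\mx,\xi)\,\nu_\eps\,dW_t$ by the usual predictability/martingale argument, exactly as in \cite{DV}. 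The limit $h$ thus satisfies \eqref{WeakFormFinal}, i.e.\ it is a stochastic kinetic solution; by Corollary~\ref{Kor} it has the form $h=\mathrm{sign}_+(u-\xi)$, and passing to the limit in \eqref{H2} shows $u$ fulfils the second item of Definition~\ref{admissibility}. Therefore $u$ is an admissible stochastic solution, and uniqueness has been established above.

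The main obstacle is this last passage to the limit. Ruling out a loss of compactness in the kinetic variable --- that is, showing that $\lim\nu_\eps$ is genuinely of equilibrium type $\delta(u-\xi)$ rather than a more general Young measure --- relies on the strong $L^2$ convergence coming from the parabolic regularisation together with the doubling argument of Section~\ref{uniquenessec} applied to the limit itself, and the convergence of the stochastic integral together with the predictability of the limiting kinetic measure $m$ requires the careful stochastic-compactness bookkeeping (tightness of laws, Skorokhod representation or martingale characterisation) borrowed from \cite{DV}. A secondary, purely technical point is that the whole limit procedure is first carried out in local charts, with the Gramian weights and Christoffel corrections as in \eqref{N1}--\eqref{N2}, and then globalised by means of the geometry compatibility condition \eqref{geomcomp}, just as in Section~\ref{uniquenessec}.
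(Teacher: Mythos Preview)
Your overall strategy matches the paper's, but there is a logical gap in how you obtain strong convergence of $(u_\eps)$. You write that this comes ``from the proof of Theorem~\ref{t1}'', but that proof only yields convergence of the Galerkin approximants $u_n\to u_\eps$ for \emph{fixed} $\eps$: the compactness estimate there is $\eps\, E\big[\int_M|\nabla_g u_n|^2\big]\le C$ (see \eqref{H2}), which degenerates as $\eps\to 0$ and gives no uniform-in-$\eps$ spatial regularity. Likewise, invoking Lemma~\ref{lconv-1} and the truncation principle by themselves does not help, since at that stage you have not yet shown that $(T_N(u_\eps))_\eps$ converges at all.

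The paper's order of operations is the reverse of yours, and this reversal is the essential point. One first passes to the weak-$\ast$ limit in \eqref{E1} using only that the kinetic equation is \emph{linear} in $h_\eps$ and $\nu_\eps=-\partial_\xi h_\eps$; no strong convergence is needed for this step. The limit $h$ is then a kinetic solution, and Corollary~\ref{Kor} --- i.e.\ the doubling argument applied to the limit itself --- forces $h=\mathrm{sign}_+(u-\xi)$ for some $u$. Only \emph{after} this identification does strong convergence follow, via the DiPerna trick: from $\mathrm{sign}_+(u_\eps-\xi)\overset{\ast}{\rightharpoonup}\mathrm{sign}_+(u-\xi)$ one multiplies by $\chi_{(-N,N)}(\xi)$ and by $\xi\,\chi_{(-N,N)}(\xi)$ and integrates in $\xi$ to obtain simultaneously $T_N(u_\eps)\rightharpoonup T_N(u)$ and $T_N(u_\eps)^2\rightharpoonup T_N(u)^2$ weakly, which together force $T_N(u_\eps)\to T_N(u)$ strongly in $L^2_{\mathrm{loc}}$ (this is \eqref{wconv} in the paper). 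Lemma~\ref{lconv-1} and the truncation principle then upgrade this to $u_\eps\to u$ in $L^1_{\mathrm{loc}}$, which transfers the $L^2$ bound to $u$. In short, the doubling argument is not a secondary consistency check but the mechanism that \emph{produces} compactness; your proposal has the arrow pointing the wrong way.
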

\begin{proof}
First, let us prove that the family $(m_\eps)$ is the family of uniformly bounded functionals on $L_{loc}^2(\Omega;C_0([0,T]\times M \times [-R,R]))$ for any $R>0$. To this end, we simply take a test function $\varphi \in C^2_c([0,T]\times M\times [-R,R] )$ and test it against \eqref{E1}. We get

\begin{align*}
&\int\limits_{[0,\text{T}]\times \text{M} \times [-R,R]} \left(  -{\rm sign}_+(u_\eps-\xi)\pa_t \varphi-  {\rm sign}_+(u_\eps-\xi)) \, \mff'(\mx, \xi)\cdot \nabla_g \varphi \right) dt d\mx d\xi\\
&- \int\limits_{[0,\text{T}]\times \text{M} \times [-R,R]} \eps  ({\rm sign}_+(u_\eps-\xi)) \Delta_g \varphi dt d\mx d\xi  - \int\limits_{[0,\text{T}]\times \text{M} \times [-R,R]} \!\!\!\!\p_\xi \varphi(t,\mx,\xi) \, \frac{\Phi^2 (\mx,\xi)}{2} d\nu_\eps dt d\mx \\& - \int\limits_{[0,\text{T}]\times \text{M} \times [-R,R]} \varphi(t,\mx,\xi) \Phi(\mx,\xi) d\nu_\eps dW_t d\mx  = - \int\limits_{[0,\text{T}]\times \text{M} \times [-R,R]} \pa_\xi \varphi(t,\mx,\xi) d m_\eps ,
\nonumber
\end{align*} Finding square of the latter expression, using the basic Young inequality (sometimes called the Peter-Paul inequality) and the It{\^o} isometry, we get
\begin{align*}
&E \Big(\big|\int_{[0,\text{T}]\times \text{M} \times [-R,R]} \pa_\xi \varphi(t,\mx,\xi) d m_\eps \big|^2 \Big) \leq 5\Big( E\big(\big| \int_{[0,\text{T}]\times \text{M} \times [-R,R]}  |\pa_t \varphi| dt d\mx d\xi \big|^2 \Big) \\&+  E\Big(\big| \int_{[0,\text{T}]\times \text{M} \times [-R,R]} |\mff'(\mx, \xi)\cdot \nabla_g \varphi | dt d\mx d\xi \big|^2 \Big)  +E\Big(\big| \int_{[0,\text{T}]\times \text{M} \times [-R,R]}  \eps |\Delta_g  \varphi| dt d\mx d\xi \big|^2 \Big)\\&  + E\Big(\big{|}\sup\limits_{\xi\in [-R,R]}\int_{[0,\text{T}]\times \text{M}} \pa_\xi \varphi(t,\mx,\xi)\frac{\Phi^2 (\mx,\xi)}{2}dt d\mx \big|^2 \Big)+E\Big(\big{|}\sup\limits_{\xi\in [-R,R]}\int_{[0,\text{T}]} |\int_{\text{M}} |\varphi(t,\mx,\xi) \Phi(\mx,\xi)d\mx |  dt \big{|}^2)\Big).
\end{align*}
Here, we then choose $\int_{-R}^\xi \varphi(t,x,\eta) d\eta$ for a fixed $\varphi \in C^2_c([0,T]\times M;C^2([-R,R]))$ to get that the family $(m_\eps)$ is bounded functional on the Bochner space $L_{loc}^2(\Omega;C^2([0,T]\times M \times [-R,R]))$. Since $m_\eps$ are non-negative, according to the Schwartz lemma on non-negative distributions, we know that $m$ is bounded in If we fix $K\subset\subset M$, we know that $(m_\eps)$ is bounded functional on the Bochner space $L^2(\Omega;C_0([0,T]\times K \times [-R,R]))$. Thus, $(m_\eps)$ is weakly precompact in $L^2_{w\star}(\Omega;C_0([0,T]\times K \times [-R,R]))$ (see \cite[p.606]{Edv}). By using the Kantor diagonalization procedure, we conclude that there exists a subsequence $(m_n)_{n\in \N}$ of $(m_\eps)$ weakly converging toward $m\in L^2_{w\star}(\Omega;C_0([0,T]\times M \times [-R,R]))$. 

Now, we need to derive estimates for $u_\eps$. The procedure is essentially the same as when deriving the estimates for $(m_\eps)$. Indeed, take in \eqref{app-ec} the entropy $\theta(z)=z^2$ and integrate over $[0,T] \times M$. We get
\begin{align}
\label{app-ec-1}
& \int\limits_\text{M} u^2_\eps(T,\mx) d\gamma(\mx) - \int\limits_\text{M} u^2_0(T,\mx) d\gamma(\mx) \leq  \\
& \int\limits_0^\text{T} \int\limits_{\text{M}} \Phi^2(\mx,u_\eps) d\gamma(\mx)dt +\int\limits_0^\text{T} \int\limits_{\text{M}} 2 \Phi(\mx,u_\eps) u_\eps d\gamma(\mx) dW 
\nonumber
\end{align} where we used compactness of the manifold $M$ which provides
$$
\int\limits_\text{M} \Big(-\Div_g \int_0^{u_\eps} \theta'(v) \mff'(\mx,v) dv\Big) d\gamma(\mx)=0 \ \ {\rm and} \ \ \int\limits_\text{M} \Delta_gu_\eps d\gamma(\mx)=0.
$$ Now, we pass $\int_M u^2_0(T,\mx) d\gamma(\mx)$ to the right hand side, square both sides of such obtained expression and use the It\^o isometry in the last step to discover

\begin{align}
\label{app-ec-2}
& E\left( \left( \int\limits_\text{M} u^2_\eps(T,\mx) d\gamma(\mx) - \int\limits_\text{M} u^2_0(T,\mx) d\gamma(\mx) \right)^2 \right) \leq  \\
& 2 E\left( \left(\int\limits_0^\text{T} \int\limits_\text{M} \Phi^2(\mx,u_\eps) d\gamma(\mx)dt \right)^2 +\int\limits_0^\text{T} \left(\int\limits_\text{M} 2 \Phi(\mx,u_\eps) u_\eps d\gamma(\mx)\right)^2 dt \right) .
\nonumber 
\end{align} From here, using assumed bounds on the function $\Phi$, we conclude that the expected value of $L^2(M)$-norm of the sequence $(u_\eps(T,\cdot))$ is  bounded i.e., after integrating everything over $T\in [0,t]$, $t>0$, we concude that the expected value of $L^2([0,t]\times M)$-norm of the sequence $(u_\eps)$ is  bounded as well. 

Denote by $h$ weak-$\star$ limit in $L^\infty([0,T]\times M \times \R\times \Omega)$ along a subsequence of the family $(h_\eps)={\rm sign}_+(u_\eps-\lambda)$. It satisfies \eqref{WeakFormFinal} as well as Definition \ref{kineticsol}. Thus, it is a unique kinetic solution to \eqref{main-eq}, \eqref{ic}. According to Corollary \ref{Kor}, it has the form $h={\rm sign}_+(u-\xi)$. Finally, it remains to prove that the second item from the Definition \ref{admissibility} is satisfied. This follows from the fact that
$$
{\rm sign}_+(u_\eps(t,\mx)-\lambda)\underset{L^\infty-\star}{\rightharpoonup} {\rm sign}_+(u(t,\mx)-\lambda), \ \ \eps \to 0,
$$ from where, since ${\rm sign}(z)={\rm sign}_+(z)-(1-{\rm sign}_+(z))$ it also follows 
$$
{\rm sign}(u_\eps(t,\mx)-\lambda)\underset{L^\infty-\star}{\rightharpoonup} {\rm sign}(u(t,\mx)-\lambda), \ \ \eps \to 0.
$$

Fix $N>0$ and multiply this first by  the characteristic function of the interval $(-N,N)$ denoted by $\chi_{(-N,N)}(\lambda)$  and then by $\lambda \chi_{(-N,N)}(\lambda)$. We get for the truncation operator $T_N$:
\begin{equation}
\label{wconv}
\begin{split}
&T_N(u_\eps) \underset{{L^\infty-\star}}{\rightharpoonup} T_N(u) \ \ {\rm as} \ \ \eps\to 0\\
& (u_\eps^2-N^2 )\chi_{|u_\eps|<N}=T_N(u_\eps)^2-N^2 \underset{L^\infty-\star}{\rightharpoonup} (u_\eps^2-N^2 )\chi_{|u|<N}=T_N(u)^2-N^2  \ \ {\rm as} \ \ \eps\to 0.
\end{split}
\end{equation} Now, we consider for a fixed non-negative test function $\phi\in C_c([0,T]\times M)$:

\begin{align*}
&E\Big(\int\limits_{[0,\text{T}]\times \text{M}} \phi(t,\mx)\big( T_N(u_\eps)-T_N(u)\big)^2 dt d\gamma(\mx) \Big)\\
&=
E\Big(\int\limits_{[0,\text{T}]\times \text{M}} \phi(t,\mx)\big[ T_N(u_\eps)^2-T_N(u)^2 \\
& +2T_N(u) (T_N(u)-T_N(u_\eps))\big] dt d\gamma(\mx) \Big) \to 0
\end{align*} according to \eqref{wconv}. From here, we conclude that expectations of the truncations $(T_N(u_\eps))$ of the sequence $(u_\eps)$ strongly converge in $L_{loc}^2([0,T]\times M)$ toward \\ $E(T_N(u))=E(u^N)$.

From here, we can conclude about the convergence of $(E(|u_\eps -u|^2)$.
First, we  prove that the obtained sequence $(u^N)$  converges strongly in
$L^1_{loc}([0,T]\times M)$ as $N\to \infty$.

To this end, let $U\subset\subset [0,T]\times M$. It holds 
\begin{equation}
\label{uni_n}
\lim_l \sup_n E(\|{T_l(u_{n}) -  u_{n}}\|_{L^1(U)}) \to 0\,.
\end{equation}
Denote by
$$
U_n^l=\{ (t,\mx)\in U:\, E(u_{n}(t,\mx,\cdot)) > l \}.
$$
Since $(E(\|u^2_{n}\|_{L^2(U)}))$ is bounded, we have
\begin{align*}
& E\Big(\int_U |u_{n}-T_l(u_{n})| dt d\mx \Big)
\leq E\Big(\int_{U_n^l} |u_{n}|dt d\mx \Big) \underset{l\to \infty}  \leq {\rm meas}(U_n^l)^{1/2} \, E(\|u_n\|_{L^2([0,T]\times M)})\to 0
\end{align*} uniformly with respect to $n$ according to \eqref{conv1}.
Thus, \eqref{uni_n} is proved.

Next, we have
\begin{align}
\label{ul} E(\|u^{l_1}-u^{l_2}\|_{L^1( U)})
&\leq
E(\|u^{l_1}-T_{l_1}(u_{n_k})\|_{L^1( U)}+\|T_{l_1}(u_{n_k})-u_{n_k}\|_{L^1( U)})\nonumber\\
&+E(\|T_{l_2}(u_{n_k})-u_{n_k}\|_{L^1( U)}+\|T_{l_2}(u_{n_k})-u^{l_2}\|_{L^1( U)})\,,\nonumber
\end{align}
which together with  \eqref{uni_n} implies that $(u^l)$ is a Cauchy sequence with respect to expectation of the $L^1$-norm.  Thus, there exists a measurable function $u$ such that
\begin{equation}
\label{conv2}
E(\| u^l - u \|_{ L^1(U}))\to 0 \ \ {\rm as} \ \ l\to \infty.
\end{equation}

Now it is not difficult to see that entire $(u_{n_k})$ converges toward $u$
in the same norm as well as well. Namely, it holds

\begin{align*}
  E(\|u_{n_k}-u\|_{L^1(U)}) \leq
E(\|u_{n_k}-T_l(u_{n_k})\|_{L^1(U)}) + \\ +E(\|T_l(u_{n_k})-u^l\|_{L^1(U)})+E(\|u^l-u\|_{L^1(U)}),
\end{align*} which by the definition of functions $u^l$, and convergences \eqref{uni_n} and \eqref{conv2} imply the statement. Moreover, since $u_n$ is bounded in expectation of the $L^2$-norms, the function $u$ must be such as well. \end{proof}

\section{Acknowledgement}

The research is supported in part by the project P30233 of the Austrian Science Fund FWF. It is also supported by the Austria-Montenegro bilateral project "Stochastic flow over non-flat manifolds".

\end{document}